\begin{document}

\title*{On Rank Problems for Planar Webs and Projective Structures}

\author{Vladislav V. Goldberg\inst{1}\and
Valentin V. Lychagin\inst{2}}
\institute{New Jersey Institute of Technology, Newark, NJ, USA
\texttt{vladislav.goldberg@gmail.com} \and University of Tromso, Tromso, Norway
 \texttt{Valentin.Lychagin@matnat.uit.no}}
%
%
\maketitle

\begin{abstract}
We present some old and recent results on rank problems and  linearizability of geodesic planar webs.
\end{abstract}

\section{Introduction}
In this paper we continue our studies of geodesic planar webs \cite{GL08a}.

We give a modification of the Abel's elimination method. This method allows one to find  all abelian relations admitted by a planar web and therefore to determine the rank of the web.  It requires to solve step-by-step a series of ordinary differential equations. In \cite{P05} (see also \cite{P04}) the same modification is given by a little bit different approach.

On the other hand, we present the method of finding the web rank by means of differential invariants of the web, i.e., the determination of the web rank without solving  the  differential equations. Pantazi \cite{Pa38} found some necessary and sufficient conditions for a planar web to be of maximum rank. The paper \cite{Pa38} was followed by the papers \cite{Pa40}\ and \cite{Mi41}. Pirio in \cite{P04} presented a more detailed exposition of results of Pantazi in \cite{Pa38} and \cite{Pa40}\ and Mih\u{a}ileanu in \cite{Mi41}. The characterization of webs of maximal  rank in \cite{Pa38} and \cite{Mi41} is not given in terms of the web invariants.

We give also an alternative construction (the previous one was given in \cite{GL08a}) of the unique projective structure associated with a planar 4-web. Note that Theorem 7 was first proved in \cite{BB38} (see \S 29, p. 246) and that the result in \cite{BB38} was recently generalized in \cite{P08} for any dimension. Our method exploits differential forms and  gives an explicit formula  for the projective connection. Remark that this method, as well as one in (\cite{GL08a}), can be used in any dimension. Presence of the projective structure allows us to connect a differential invariant (which we call the Liouville tensor) with any planar 4-web. This tensor gives a criterion for linearizability of geodesic planar webs (cf. \cite{AGL04}).

\section{Planar Webs}

All constructions in the paper are local, and we do not specify
domains in which they are valid. Functions, differential forms,
etc. are real and of class $C^{\infty }$.

\emph{A planar $d$-web} is given by $d$ one-dimensional foliations
in the plane which are in general position, i.e., the directions
corresponding to different foliations are distinct. The local
diffeomorphisms of the plane act in the natural way on $d$-webs,
and they say that two $d$-webs are (locally) \emph{equivalent} if
there exists a local diffeomorphism which sends one $d$-web to
another.

Because all $2$-webs are locally equivalent, we begin with
$3$-webs.

A 3-web can be defined either by three differential $1$-forms, say,
$\omega_{1}, \omega_{2}, \omega_{3}$, where
$$
\omega_{1}\wedge \omega_{2}\neq 0, \;\; \omega_{2}\wedge
\omega_{3}\neq 0, \;\; \omega_{1}\wedge \omega_{3}\neq 0,
$$
or by the first integrals of the foliations, say,
$f_{1},f_{2},f_{3}$, where
$$
df_{1}\wedge df_{2}\neq 0, \;\; df_{2}\wedge df_{3}\neq 0,\;\;
df_{1}\wedge df_{3}\neq 0.
$$
The above functions $f_{1},f_{2},f_{3}$ are called
\emph{web functions}.

Remark that the web functions are defined up to \emph{gauge
transformations}
$$
f_{i}\mapsto \Phi_{i}(f_{i}),
$$
where $i=1,2,3,$ and $ \Phi_{i}:\mathbb{R}\rightarrow \mathbb{R}$
are local diffeomorphisms of the line.

The implicit function theorem states that there is a relation
$$
W(f_{1},f_{2},f_{3})=0
$$
for these functions.

The above relation is called (see, for example, \cite{B55}) the
\emph{web equation}.

Any pair of functions in this equation is locally
indistinguishable and can be viewed as local coordinates on the
plane.

Keeping in mind this observation, we consider a space
$\mathbb{R}^{3}$ with coordinates $u_{1}, u_{2}, u_{3}$ and
two-dimensional surface
$$
\Sigma\subset \mathbb{R}^{3}
$$
given by the equation
$$
W(u_{1}, u_{2}, u_{3})=0.
$$
We say that $\Sigma\subset \mathbb{R}^{3}$ is a \emph{web surface}
if any two functions $u_{i}, u_{j}$ are local coordinates on
$\Sigma$.

In the case of $d$-webs one can choose $d$ local first integrals
of the corresponding foliations, say,
$f_{1},f_{2},f_{3}...,f_{d}$, which are also called \emph{web
functions}. They define a map
$$
\sigma:\mathbb{R}^{2}\rightarrow \mathbb{R}^{d},
$$
where
$$
\sigma : (x,y)\in \mathbb{R}^{2}\mapsto
(u_{1}=f_{1}(x,y),....,u_{d}=f_{d}(x,y))\in \mathbb{R}^{d}.
$$
The image $\Sigma $ of this map is a two-dimensional surface in
$\mathbb{R}^{d}.$ Remark that any pair of functions $u_{i}, u_{j}$
are local coordinates on $\Sigma $.

From this point of view, the local theory of planar $d$-webs is
just a geometry of web surfaces in $\mathbb{R}^{d}$ considered
with respect to the gauge transformations.

\section {Basic Constructions}
Let us begin with 3-webs, and let differential $1$-forms
$\omega_{1}, \omega_{2}, \omega_{3}$ define such a web. These
forms are determined up to multipliers $\omega_{i} \leftrightarrow
\lambda_{i} \omega_{i}$, where $\lambda_{i}$ are smooth
nonvanishing functions. Hence, these forms can be normalized in
such a way that
\begin{equation}
\omega _{1}+\omega _{2}+\omega _{3}=0 \label{3-web normalization}
\end{equation}%
with only possible scaling $\omega_{i} \leftrightarrow \lambda
\omega_{i}$.

One can prove that in this case there is a unique differential $1$%
-form $\gamma $ such that the so-called \emph{structure equations}
\begin{equation}
d\omega _{i}=\omega _{i}\wedge \gamma \label{web structure equations}
\end{equation}%
hold for all $i=1,2,3$ (see \cite{AGL04}).

The form $\gamma $ determines the\emph{ Chern connection} $\Gamma$
in the cotangent bundle $T^{\ast }M$ with the following covariant
differential:
\begin{equation*}
d_{\Gamma }:\omega _{i}\longmapsto -\omega _{i}\otimes \gamma .
\end{equation*}%
The curvature of this connection is equal to
\begin{equation*}
R_{\Gamma }:\omega _{i}\longmapsto -\omega _{i}\otimes d\gamma .
\end{equation*}%
If we write
\begin{equation*}
d\gamma =K\omega _{1}\wedge \omega _{2},
\end{equation*}%
then the function $K$ is called the \emph{curvature function} of
the $3$-web.

Note that the curvature form $d\gamma $ is an invariant of the
$3$-web while the curvature function $K$ is a relative invariant
of the web of weight two.

Let $\left\langle \partial _{1},\partial _{2}\right\rangle$ be the
basis dual to $\left\langle \omega_{1}, \omega_{2}\right\rangle.$
We put $\partial_{3}=\partial_{2}-\partial_{1}.$ Then leaves of
the $3$-web are trajectories of the vector fields $\partial_{2},
\partial_{1},$ and $\partial _{3}.$

The form $\gamma$ can be decomposed as follows:

\begin{equation*}
\gamma =g_{1}\omega _{1}+g_{2}\omega _{2},
\end{equation*}%
where $g_{1}$ and $g_{2}$ are smooth functions.

Moreover, in this case one has (see \cite{GL06})
\begin{equation}
\lbrack \partial _{1},\partial _{2}]=-g_{2}\partial _{1} +
g_{1}\partial _{2} \label{commutator}
\end{equation}%
and
\begin{equation}
K=\partial_{1}\left( g_{2}\right) -\partial_{2}\left( g_{1}\right). \label{curvature equation for web}
\end{equation}%
Remark also that the covariant derivatives with respect to the Chern
connection have the form
$$
\nabla_{X} (\omega_{i})=-\gamma (X) \omega_{i}
$$
and
$$
\nabla_{X} (\partial_{i})=-\gamma (X) \partial_{i}.
$$
It shows that the leaves of all three foliations are geodesic with
respect to the Chern connection.

Let $d_{\nabla }:\Omega ^{1}(M)\rightarrow \Omega ^{1}\left( M\right)
\otimes \Omega ^{1}\left( M\right) $ be the covariant differential with
respect to the Chern connection.

The induced connection in the tangent bundle gives the differential
$$
d_{\nabla
}^{\ast }:\mathcal{D}\left( M\right) \rightarrow \mathcal{D}\left( M\right)
\otimes \Omega ^{1}\left( M\right),
$$
where
$$
d_{\nabla }:\partial _{i}\rightarrow \partial _{i}\otimes \gamma .
$$

In a similar way the Chern connection induces the covariant differential in the tensor bundles.

Let us denote by
$
\Theta ^{p,q}\left( M\right) =\left( \mathcal{D}\left( M\right)
\right) ^{\otimes p}\otimes \left( \Omega ^{1}\left( M\right) \right)
^{\otimes q}
$
the module of tensors of type $\left( p,q\right) .\;$

Then the covariant differential
\begin{equation*}
d_{\nabla }:\Theta ^{p,q}\left( M\right) \rightarrow \Theta
^{p+1,q}\left( M\right)
\end{equation*}%
acts as follows:
\begin{equation*}
d_{\nabla }:u\partial _{j_{1}}\otimes \cdots \otimes \partial
_{j_{p}}\otimes \omega _{i_{1}}\otimes \cdots \otimes \omega
_{i_{q}}\longmapsto \partial _{j_{1}}\otimes \cdots \otimes \partial
_{j_{p}}\otimes \omega _{i_{1}}\otimes \cdots \otimes \omega _{i_{q}}\otimes
\left( du+\left( p-q\right) \gamma u\right)
\end{equation*}%
where $u\in C^{\infty }\left( M\right) .$

We say that $u$ is of weight $k=q-p$ and call the form
\begin{equation}
\delta ^{(k)}\left( u\right) \overset{\text{def}}{=}du-k u\gamma
\label{covariant differential Vadim}
\end{equation}%
the \emph{covariant differential }of $u.$

Decomposing the form $\delta ^{(k) }\left( u\right) $ in the
basis $\{\omega _{1},\omega _{2}\},$ we obtain
\begin{equation*}
\delta ^{\left( k\right) }\left( u\right) =\delta _{1}^{\left( k\right)
}\left( u\right) ~\omega _{1}+\delta _{2}^{\left( k\right) }\left(
u\right) ~\omega _{2},
\end{equation*}%
where
$$
\delta _{i}^{\left( k\right) }\left( u\right)=\partial _{i}\left( u\right) -\left( k\right) g_{i}u
$$
are the covariant derivatives of $u$ with respect to the Chern connection, $i=1,2$.

Note that $\delta _{1}^{\left( k\right) }\left( u\right) $ and $\delta
_{2}^{\left( k\right) }\left( u\right) $ are of weight $k+1.$

 One can check that the covariant derivatives satisfy the classical Leibnitz rule

\begin{equation*}
\delta _{i}^{\left( k+l\right) }\left( uv\right) =\delta _{i}^{\left(
k\right) }\left( u\right) ~v+u~\delta _{i}^{\left( l\right) }\left( v\right)
\end{equation*}
if $u$ is of weight $k$ and $v$ is of weight $l$, and the following commutation relation:

\begin{equation}
\delta _{2}^{\left( s+1\right) }\circ \delta _{1}^{\left( s\right) }-\delta
_{1}^{\left( s+1\right) }\circ \delta _{2}^{\left( s\right) }=sK.
\label{covariant commutator}
\end{equation}

Note that the curvature $K$ is of weight two.

In what follows, we shall omit the superscript indicating the weight in the
cases when the weight is known.


For the general $d$-web defined by differential $1$-forms $
\omega_{1}, \omega_{2}, \omega_{3}, \dots, \omega_{d}$ we
normalize $\omega_{1}, \omega_{2}, \omega_{3}$ as above and choose
$\omega_{i}$ for $i\geq 4$ in such a way that the normalizations
\begin{equation}
a_{i}\omega_{1} + \omega_{2} + \omega_{i+2}=0 \label{normalized
eqs for d-web}
\end{equation}%
hold for $i=1,...,d-2,$ with $a_{1}=1.$

Note that $a_{i} \neq 0,1$ for $i \geq 2.$

Moreover, for any fixed $i,$\ the value $a_{i}\left( x\right) ,$
of the function $a_{i}$ at the point $x$ is the cross-ratio of the
four straight lines in the cotangent space $T_{x}^{\ast }$
generated by the covectors $\omega_{1,x}, \omega_{2,x},
\omega_{3,x}$, and $\omega _{i+2,x},$ and therefore it is a web
invariant. The functions $a_{i}$ are called the \emph{basic
invariants} (cf. \cite{G04} or \cite{G88}, pp. 302--303) of the
web.

Because of locality of our consideration, one can  choose a
function $f$ in such a way that $\omega_{3}= df$ and find
coordinates $x,y$  such that $\omega_{1} \wedge dx=0$ and
$\omega_{2} \wedge dy = 0$.

Let also $\omega _{i+3} \wedge dg_{i}$ $=0,$  for some functions
$g_{i}(x,y)$, $i=1,...,d-3$.

Then $\omega_{1} = -f_{x} dx$ and $\omega_{2} = -f_{y} dy$.

The dual basis $\left\{\partial_{1},\partial_{2}\right\}$ has the
form
\begin{equation*}
\partial_{1} = -f_{x}^{-1} \partial_{x},\ \ \ \partial_{2} = -f_{y}^{-1} \partial_{y},
\end{equation*}%
and the connection form is
\begin{equation*}
\gamma =-H \omega_{3},
\end{equation*}%
where
\begin{equation*}
H= \frac{f_{xy}}{f_{x}~f_{y}}
\end{equation*}%
(see \cite{GL06}).

The curvature function has the following explicit expression:
\begin{equation}
K  = -\displaystyle\frac{1}{f_{x} f_{y}} \Biggl(\log
\displaystyle\frac{f_{x}}{f_{y}}\Biggr)_{xy},
     \label{3-web curvature}
\end{equation}%
and the basic invariants have the form%
\begin{equation*}
a_{i} = \frac{f_{y}g_{i,x}}{f_{x}g_{i,y}}
\end{equation*}%
for $i=1,...,d-3.$

Note that if a three-web $W_3$ is given by a web equation $W (u_1,
u_2, u_3) = 0$, then the curvature $K$ is expressed as follows
(see \cite{B55}, \S 9):
\begin{equation}
K  =A_{12} + A_{23} + A_{31},
     \label{3-web curvature for W(u1,u2,u3)=0}
\end{equation}
where
$$
A_{rs} = \displaystyle\frac{1}{W_{r} W_{s}}
\frac{\partial^2}{\partial u_r \partial u_s} \log
\displaystyle\frac{W_{r}}{W_{s}},
$$
and subscripts $r$ and $s$ mean the partial derivatives of the
function with respect to the variables $u_r$ and $u_s$, where $ r,
s = 1, 2, 3$.

Recall that a planar $d$-web is said to be (locally)
\emph{parallelizable} if it is (locally) equivalent to a $d$-web
of parallel straight lines in the affine plane.

It is  known (see, for example, \cite{B55}, $\S 8$) that
 \emph{a planar} $3$\emph{-web is locally parallelizable if and only if }$K=0.$

For planar $d$-webs, $d\geq 4,$ the following statement holds (cf.
\cite{G04} or \cite{G88}, Section 7.2.1 for $d=4$): \emph{a planar
$d$-web  $\left\langle \omega _{1},\omega _{2},\omega _{3},\omega
_{4},...,\omega _{d}\right\rangle $ is locally parallelizable if
and only if its $3$-subweb $\left\langle \omega
_{1},\omega _{2},\omega _{3}\right\rangle $ is locally parallelizable $($%
i.e., $K=0)$, and all basic invariants $a_{i}$ are constants}.

\section{Rank}

We begin with an interpretation of the classical Abel addition
theorem (\cite{Ab29}) in terms of planar webs (cf. \cite{B55}).

Let us consider linear webs on the affine plane, i.e., such planar
webs leaves of which are straight lines. There is an elegant
method to construct such webs. Take a straight line $rx+sy=1$ on
the affine plane and assume that the coefficients $(r,s) \in
\mathbb{R}^2$ satisfy an algebraic equation $P_{d}(r,s)=0$ of
degree $d$.

Given $(x,y)$, then the system
$$
\renewcommand{\arraystretch}{1.5}
\left\{
\begin{array}{ll}
rx + sy &=1, \\
P_{d}(r,s)&=0
\end{array}
\right.
\renewcommand{\arraystretch}{1}
$$
has at most $d$ roots.

Assume that in a domain on the plane $(x,y)$ the above system has
exactly $d$ roots. Then in this domain we have a linear $d$-web.

Take now a cubic polynomial
$$
P_{3}(s,t) = s^{2} - 4r^{3} - g_{2}r -g_{3},
$$
where $g_{2}$ and $g_{3}$ are constants.

Then the system
$$
\renewcommand{\arraystretch}{1.5}
\left\{
\begin{array}{ll}
rx + sy=1, \\
s^{2} -4r^{3} - g_{2}r - g_{3}=0
\end{array}
\right.
\renewcommand{\arraystretch}{1}
$$
in the domain
$$
x^{4} - 24 xy^{2} - 12 g_{2} y^{4} > 0,\ y \neq 0,
$$
has three distinct real roots and consequently three pairwise
distinct straight lines $(r_{i} (x, y), s_{i}(x, y))$, passing
through the point $(x, y)$. In other words, we have a linear
$3$-web.

Assume that
$$
g_{2}^{3}-27g_{3}^{2}\neq 0.
$$

Then the solutions of the equation $%
s^{2}-4r^{3}-g_{2}r-g_{3}=0$ can be parameterized by the
Weierstrass' elliptic function with the invariants $g_{2}$ and
$g_3$:
$$
r=\wp ( t) ,  s=\wp^{\prime }(t) .
$$
Hence, the roots $(r_{i} (x, y), s_{i} (x, y))$ correspond to
three solutions $(t_{i} ( x, y))$ of the equation
\begin{equation*}
\wp (t) x + \wp ^{\prime}(t) y - 1 = 0.
\end{equation*}%
Let us put
$$
f(t)=\wp (t) x + \wp^{\prime} (t) y - 1
$$
and compute the integral
\begin{equation*}
\int t\frac{f^{\prime} (t)} {f( t)} dt
\end{equation*}%
along the boundary of the period parallelogram of the Weierstrass function.
We get
\begin{equation*}
t_{1} (x, y) + t_{2} (x, y) + t_{3} (x, y) =\operatorname*{const}.
\end{equation*}%
This is the \emph{abelian relation}.

This relation can be understood geometrically if we note that, by
the construction, the functions $t_{1} \left(x,y\right), t_{2}
\left(x,y\right),$ and $t_{3} \left(x,y\right)$ are first
integrals of the corresponding $3$-web.

In more general case, let us consider an arbitrary planar $d$-web
defined by $d$ web functions
$$
f_{1}, \dots ,f_{d}.
$$

Then by \emph{abelian relation} we mean a relation
\begin{equation*}
F_{1} \left(f_{1}\right) + \dots + F_{d} \left(f_{d}\right)
=\operatorname*{const}.
\end{equation*}%
given by $d$ functions $(F_{1}, \dots , F_{d})$ of one variable.

We say that two abelian relations $\left(F_{1}, \dots ,
F_{d}\right)$ and $\left(G_{1}, \dots, G_{d}\right)$ are
\emph{equivalent} if
$$
F_{i}=G_{i}+\operatorname*{const}_{i},
$$
for all $i = 1, \dots , d$.

The set of equivalence classes of abelian relations
admits the natural vector space structure with respect to addition
$$
\left(F_{1}, \dots , F_{d}\right) +\left( G_{1}, \dots ,
G_{d}\right) =\left( F_{1} + G_{1}, \dots , F_{d} + G_{d}\right)
$$
and multiplication by numbers
$$
\alpha \left(F_{1}, \dots , F_{d}\right) =\left(\alpha F_{1},
\dots , \alpha F_{d}\right).
$$
The dimension of this vector space is called the \emph{rank} of the web.

In the case when $d$-web is defined by differential $1$-forms
$$
 \omega_{1}, \dots , \omega_{d},
$$
the differentiation of the abelian relation leads us to the
\emph{abelian equation}
\begin{equation*}
\lambda_{1} \omega_{1} + \dots + \lambda_{d} \omega_{d} = 0,
\end{equation*}%
for functions $\lambda_{1}, \dots , \lambda_{d}$ under the
condition that all differential $1$-forms $\lambda_{i} \omega_{i}$
are closed:
$$
d(\lambda_{i} \omega_{i}) = 0.
$$

The abelian equation is a system of the first-order linear PDEs for the functions $\left( \lambda
_{1},...,\lambda _{d}\right) ,$ and the rank of the web is the
dimension of the solution space.
\begin{example}
The following example illustrates the above constructions for $3$-webs.

Consider the $3$-web $W_{3}$ given by  web functions:
$$
x, y, f(x,y).
$$
Then
\begin{equation*}
\omega_{1} = -f_{x}dx, \;\; \omega _{2} = -f_{y}dy, \;\;\omega _{3}=df,
\end{equation*}%
and the condition
\begin{equation*}
\lambda_{1} \omega_{1} + \lambda_{2} \omega_{2} + \lambda_{3}
\omega_{3} = 0
\end{equation*}%
implies
\begin{equation*}
\lambda_{1} = \lambda_{2} = \lambda_{3}=\lambda.
\end{equation*}%
The abelian relations take now the form
$$
\renewcommand{\arraystretch}{1.3}
\left\{
\begin{array}{ll}
(\lambda f_{x})_{y}&=0, \\
(\lambda f_{y})_{x}&=0,\\
\lambda_{x}f_{y}-\lambda_{y}f_{x}&=0,\nonumber
\end{array}
\right.
\renewcommand{\arraystretch}{1}
$$
or
$$
\renewcommand{\arraystretch}{1.3}
\left\{
\begin{array}{ll}
   (\ln\lambda)_{x}=-(\ln f_{x})_{y},\\
   (\ln\lambda)_{y}=-(\ln f_{y})_{x}.
\end{array}
\right.
\renewcommand{\arraystretch}{1}
$$
The compatibility condition for this system has the form
$$
(\ln f_{x})_{xy} = (\ln f_{y})_{xy}
$$
or
$$
K = 0.
$$
So, we can conclude this consideration by the following statement:
\emph{rank of a $3$-web does not exceed one, and the rank equals
to one if and only if the $3$-web is parallelizable}.
\end{example}

\section{Abel's Method}

In this section we discuss the rank problem in the classical
setting. A method of finding the rank, or in other terms, a method
of solving abelian relations was proposed by Abel himself (see
\cite{Ab23}). This method is just a consistent elimination of the
functions from the abelian relation by using only differentiation.

Let us consider a planar $d$-web defined by web functions $f_{1},
\dots , f_{d}$ and the corresponding  abelian relation
\begin{equation} F_{1} \left(f_{1}\right) + \dots + F_{d} \left(f_{d}\right)
=\operatorname*{const}. \label{afe}
\end{equation}

Modifying Abel's method and adjusting it to equation (\ref{afe}), we
can explain it as follows:

\begin{description}
\item{a)} Taking the differential of equation (\ref{afe}), we get
\begin{equation}
F'_{1}\, df_{1} + F'_{2}\, df_{2} + \dots + F'_{d}\, df_{d}=0.
\label{differential}
\end{equation}

\item{b)} Taking the wedge product of (\ref{differential}) with
$df_1$, we eliminate $F_1$ and get the following equation:
\begin{equation}
 F'_{2} + J^{3 1}_{2 1} F'_{3} + \dots + J^{d 1}_{2 1} F'_{d} =0, \label{F1 eliminated}
\end{equation}
where
$$
J^{i j}_{k l}=\frac{\partial(f_{i},f_{j})}{\partial(f_{k},f_{l})}
$$
is the Jacobian of the functions $f_{i}, f_{j}$ with respect to
functions $f_{k}, f_{l}.$

\item{c)} Taking the wedge product of the  differential of
(\ref{F1 eliminated}) with $df_2$, we eliminate $F_2$ and get the
following equation:
\begin{equation}
 J^{3 1}_{2 1} F''_3 + a_3 F'_3 + \dots = 0, \label{F2 eliminated}
\end{equation}
where $a_3 $ is a certain function.

\item{d)} Divide equation (\ref{F2 eliminated}) by the first
coefficient and take the differential of the obtained equation; if
$F''_3 $ appears in differentiation, take its value from
equation (\ref{F2 eliminated}). This gives the following equation:
\begin{equation}
  F'''_3  df_3  + b_3 F'_3 (f_3) + \dots =0, \label{differential3}
\end{equation}
where $b_3$ is a certain function.

\item{e)} Taking the exterior product of (\ref{differential3})
with $df_3$, we eliminate $F'''_3$ and get the following
equation:
\begin{equation}
   c_3  F'_3 (f_3) + \dots = 0, \label{F''3 eliminated}
\end{equation}
where $c_3$ is a certain function.

\item{f)} Dividing equation (\ref{F''3 eliminated}) by $c_3$ and
taking the wedge product of the differential of the obtained
equation and $df_3$, we eliminate the function $F_3$.

\item{g)} Use the procedure outlined above to eliminate the
functions $F_4, \dots, F_{d-1}$.

Finally, we obtain a linear differential equation with respect to
the function $F_d (f_d)$. This equation can be viewed as family of
homogeneous ordinary linear differential equations.

\item{h)} Substitute the solution $F_d (f_d)$ into (\ref{afe}) and
apply the outlined procedure to find another function, say,
$F_{d-1} (f_{d-1})$.
\end{description}

On Abel's elimination method as well as on less general method of
monodromy
see  \cite{P05}.

Below we give few examples of application of the Abel method.

\subsection{3-Webs}

Here we apply the Abel elimination method for $3$-webs to show
once more that a planar $3$-web admits a nontrivial abelian relation if and
only if the $3$-web is parallelizable.

Suppose that a $3$-web is given by the web functions $f(x, y), x,
y$ and let
\begin{equation}
F(f) + G (x)  +  H (y) = 0. \label{afe for 3-web}
\end{equation}
be an abelian relation.

Take the differential of (\ref{afe for
3-web}):
\begin{equation}
F'(f)\,df + G' (x)\,dx  +  H' (y)\,dy = 0, \label{afe for 3-web1}
\end{equation}
and the wedge product of (\ref{afe for 3-web1}) with $df$:
\begin{equation}
-f_y \,G' (x)  +  f_x \, H' (y) = 0. \label{afe for 3-web2}
\end{equation}
Then
\begin{equation}
G' (x)  - \frac{f_x}{f_y}\, H' (y) = 0. \label{afe for 3-web3}
\end{equation}
Taking the wedge product of the differential of (\ref{afe for
3-web3}) with $dx$, we get
\begin{equation}
 H'' (y) + \Biggl(\log \displaystyle\frac{f_x}{f_y}\Biggr)_y \, H' (y)= 0.
\label{afe for 3-web5}
\end{equation}
In order equation (\ref{afe for 3-web5}) has a  nontrivial
solution, it is necessary and sufficient that the function
$\Biggl(\log \displaystyle\frac{f_x}{f_y}\Biggr)_y$ does not
depend on $x$, i.e.,
\begin{equation}
 \Biggl(\log \displaystyle\frac{f_x}{f_y}\Biggr)_{xy}=0.
\label{afe for 3-web6}
\end{equation}
This means that $K = 0$, i.e., the 3-web is parallelizable.\\

\subsection{4-Webs of Rank Three}

Assume that a $4$-web is given by the following web functions:
$$
f =x +y,\; g =xy,\; x,\; y.
$$
We will apply the Abel elimination method to find all abelian
relations admitted by this web.

Let
\begin{equation}
F (f) + G (g) + H (x)  +  K (y) = 0 \label{afe3}
\end{equation}
be an abelian relation.

Taking the differential of (\ref{afe3}):
\begin{equation}
F' (f) \,df + G' (g) \,dg + H' (x) dx + K'(y)\,
dy=0, \label{afe31}
\end{equation}
and the wedge product of (\ref{afe31}) with $dy$, we eliminate $K' (y)$:
\begin{equation}
F'(f)  + y\, G' (g) + H' (x) = 0. \label{afe32}
\end{equation}

Once more, taking the differential of (\ref{afe32}):
\begin{equation}
F''(f)\, df + y\, G''(g)\, dg + G' (g)\, dy + H''(x) \,dx = 0, \label{afe33}
\end{equation}
and the wedge product of (\ref{afe33}) with $dg$, we eliminate $G''(g)$:
\begin{equation}
(x - y) \,F''(f) -y \, G' (g) +x/, H'' (x) = 0. \label{afe34}
\end{equation}
Using equation (\ref{afe32}), we eliminate $G' (g)$ in (\ref{afe34}):
\begin{equation}
(x - y) \,F''(f) + F' (f) + H' (x) + x H'' (x) = 0. \label{afe35}
\end{equation}

Dividing equation (\ref{afe35}) by $x$, taking the differential of the result and taking
the wedge product of the differential with $dx$, we eliminate $H (x)$ and arrive
at the equation
\begin{equation}
F'''(f) = 0.
\label{afe36}
\end{equation}

Up to an arbitrary constant, the solution of equation (\ref{afe36}) is
\begin{equation}
F (f) = a \, f^2 + b \, f,
  \label{afe37}
\end{equation}
where $a$ and $b$ are arbitrary constants.
By (\ref{afe37}), equation (\ref{afe35}) gives
\begin{equation}
x\, H'' (x) + H' (x) + 2ax = 0. \label{afe38}
\end{equation}
Up to an arbitrary constant, the solution of equation (\ref{afe38}) is
\begin{equation}
H (x) = - a\, x^2 - b\, x + k \log x,
  \label{afe39}
\end{equation}
where $k$ is an arbitrary constant.

By (\ref{afe37}) and (\ref{afe39}), equation (\ref{afe34}) gives
\begin{equation}
G' (g) = - a - \frac{k}{g}. \label{afe310}
\end{equation}
Up to an arbitrary constant, the solution of equation (\ref{afe310}) is
\begin{equation}
G (g) = - 2a\, g - k \log g,
  \label{afe311}
\end{equation}

Now by (\ref{afe37}), (\ref{afe39}) and (\ref{afe311}), we find from equation (\ref{afe3})
that
\begin{equation}
K (y) = - a\, y^2 - b\,y +k \, \log y. \label{afe312}
\end{equation}

Thus, the rank is equal to three, and  we have the following three independent abelian relations:
\begin{description}
\item{$(a = 0, b = 0, k = -1)$}
$$
x + y - f = 0;
$$
\item{$(a = - 1, b = 0, k = 0)$}
$$
x^2 + y^2 + (-f^2) + (2g) =0;
$$
\item{$(a = 0, b = -1, k = 0)$}
$$
\log x + \log y + (-\log g) =0.
$$
\end{description}

The fact that the rank of this web is three was also proved in
\cite{GL07} by use of differential invariants of webs.

\subsection{4-Webs of Rank Two}

Consider a $4$-web given by the following web functions:
$$
f =x^2+y^2,\; g =x+y,\; x,\; y.
$$
We will apply the Abel elimination method and find all abelian
relations admitted by this web.

Let
\begin{equation}
F (f) + G (g) + H (x)  +  K (y) = 0 \label{afe2}
\end{equation}
be an abelian relation.

Taking the differential:
\begin{equation}
F' (f) \,df + G' (g) \,dg + H' (x)\, dx + K'(y)\,
dy=0, \label{afe21}
\end{equation}
and the wedge product of (\ref{afe21}) with $dy$, we eliminate $K (y)$:
\begin{equation}
2x \,F'(f)  + G' (g) + H' (x) = 0. \label{afe22}
\end{equation}

Once more, taking the differential of (\ref{afe22}):
\begin{equation}
2x\, F''(f)\, df +  2F'(f)\, dx + G'' (g)\, dg + H''(x) \,dx = 0, \label{afe23}
\end{equation}
and the wedge product of (\ref{afe23}) with $dx$, we eliminate $H (x)$:
\begin{equation}
2(g^2 - f) \,F''(f)  + G'' (g) = 0. \label{afe24}
\end{equation}
Taking the wedge product of the differential of (\ref{afe24}) with
$dg$, we eliminate $G (g)$:
\begin{equation}
2(g^2 - f) \, F'''(f)  -2 F'' (f) = 0.
\label{afe25}
\end{equation}

Equation (\ref{afe25}) is equivalent to the system
\begin{equation}
\renewcommand{\arraystretch}{1.3}
\left\{
\begin{array}{ll}
-2 f \, F'''(f)  -2 F'' (f) = 0,\\
 F'''(f) = 0.
 \end{array}
\right.
\renewcommand{\arraystretch}{1}
 \label{afe26}
\end{equation}
Therefore, up to an additive constant,
$$
F (f) =k f,
$$
where $k$ is a constant.

Now it follows from (\ref{afe24}) that, up to an additive constant,
$$
G(g) = b g,
$$
where $b$ is a constant.

Equation (\ref{afe22}) implies that, up to an additive constant,
$$
H (x) = -k x^2-b x,
$$
and equation (\ref{afe2}) gives that
$$
K (y) = - k y^2 - b y.
$$.

Thus, the rank of the web is equal to two, and  we have the following two basic abelian relations:
\begin{description}
\item{$(k = 0, b = 1)$}
$$
(x+y) + (- x) + (- y) = 0,
$$
\item{$(k = 1, b = 0)$}
$$
(x^2 + y^2)+ (-x^2)+ (-y^2)=0.
$$
\end{description}
In \cite{GL07} by use of differential invariants of webs, it was
shown that this $4$-web is of rank two.

\subsection{4-Webs of Rank One}

Assume that a $4$-web is given by the following web functions:
$$
f = \frac{(x-y)^2}{x},\; g = \frac{(x-y)^2}{y},\; x,\; y.
$$
We will apply the Abel elimination method to find all abelian
relations admitted by this web.

Let
\begin{equation}
F (f) + G (g)  + H (x) + K (y) = 0 \label{afe4}
\end{equation}
be an abelian relation.

Taking the differential of (\ref{afe4}):
\begin{equation}
F' (f) \,df + G' (g) \,dg + H' (x) dx + K'(y)\,
dy=0, \label{afe41}
\end{equation}
and the wedge product of (\ref{afe41}) with $df$, we eliminate $F(f)$:
\begin{equation}
G'(g)  -\displaystyle \frac{2xy^2}{(x-y)^3} H' (x) - \frac{(x+y)y^2}{(x-y)^3} K' (y) = 0. \label{afe42}
\end{equation}

Once more, taking the differential of (\ref{afe42}):
\begin{equation}
\renewcommand{\arraystretch}{2}
\begin{array}{ll}
G''(g)\, dg - \displaystyle\frac{2xy^2}{(x-y)^3} H'' (x) -\displaystyle \frac{2y(2x+y)(-y dx +x dy)}{(x-y)^3} H' (x) \\ - \displaystyle\frac{(x+y)y^2}{(x-y)^3} K'' (y) - \frac{2y(x+2y)(-y dx +x dy)}{(x-y)^3} K' (y) = 0, \label{afe43}
\end{array}
\renewcommand{\arraystretch}{1}
\end{equation}
and the wedge product of (\ref{afe43}) with $dg$, we eliminate $G (g)$:
\begin{equation}
H''(x)  + \displaystyle\frac{2x+y}{x(x+y)} \, H' (x) +\frac{y}{x} \, K'' (y) +\frac{x+2y}{x(x+y)} \, K' (y) = 0. \label{afe44}
\end{equation}
Taking the wedge product of the differential of (\ref{afe44}) with
$dx$, we eliminate $H'' (x)$:
\begin{equation}
H'(x)  - \frac{(x+y)^2 y}{x} K''' (y) - \frac{(2x+3y)(x+y)}{x}\, K'' (y) - K' (y) = 0.
\label{afe45}
\end{equation}

Taking the wedge product of the differential of (\ref{afe45}) with
$dx$, we eliminate $H' (x)$:
\begin{equation}
   (x+y) y K^{iv} (y) + 3 (x+2y)\, K''' (y) + 6 K'' (y) = 0.
\label{afe46}
\end{equation}

Equation (\ref{afe46}) is equivalent to the system
\begin{equation}
\renewcommand{\arraystretch}{1.3}
\left\{
\begin{array}{ll}
y\, K^{iv}(y)  + 3x K''' (x) = 0,\\
y^2\, K^{iv}(y) + 6y\, K'''(y) + 6 K''(y) = 0.
 \end{array}
\right.
\renewcommand{\arraystretch}{1}
 \label{afe47}
\end{equation}
It follows from (\ref{afe47}) that
\begin{equation}
   y \,K''' (y) + 2 K'' (y) = 0.
\label{afe48}
\end{equation}

Up to an additive constant, the solution of (\ref{afe48}) is
\begin{equation}
   K (y) = - k \log y + b y,
\label{afe49}
\end{equation}
where $k$ and $b$ are arbitrary constants.

It follows from (\ref{afe45}) and (\ref{afe49}) that
\begin{equation}
  H' (x) = \frac{k}{x} + b.
\label{afe410}
\end{equation}

Up to an additive constant, the solution of (\ref{afe410}) is
\begin{equation}
   H (x) =  k \log x + b x.
\label{afe411}
\end{equation}

It follows from (\ref{afe42}), (\ref{afe49}) and (\ref{afe411}) that
\begin{equation}
  G' (g) = -\frac{k}{g} + \frac{by (3x+y)}{g(x-y)}.
\label{afe412}
\end{equation}

Equation (\ref{afe412}) implies that
\begin{equation}
   b = 0
\label{afe413}
\end{equation}
and that, up to an additive constant,
\begin{equation}
   G (g) = - k \log g, \;\; H (x) = k \log x, \;\; K (y) = - k \log y.
\label{afe414}
\end{equation}

Finally, equations (\ref{afe4}) and (\ref{afe414}) give
\begin{equation}
   F (f) =  k \log f,
\label{afe415}
\end{equation}
and the 4-web admits only one independent abelian relation
\begin{equation}
  \log f - \log g + \log x - \log y = 0.
\label{afe416}
\end{equation}

By use of differential invariants of webs introduced in \cite{GL07}, one can show that this $4$-web is of rank one.

\subsection{4-Webs of Rank Zero}
Assume that a $3$-web is given by the following web functions:
$$
f = (x +y) e^x,\; g = xy, \; x,\; y.
$$
We will apply the Abel elimination method to show that this web
admits no abelian relations.

Let
\begin{equation}
F (f) + G (g) + H (x) +  K (y) = 0 \label{afe5}
\end{equation}
be an abelian relation.

Taking the differential of (\ref{afe5}):
\begin{equation}
F' (f) \,df + G'(g) \, dg + H' (x) \,dx + K'(y)\,dy=0, \label{afe51}
\end{equation}
and the wedge product of (\ref{afe51}) with $df$, we eliminate $F (f)$:
\begin{equation}
[(x + y)x + x - y]\, G' (g)- H'(x)  + (1 + x + y) \,K' (y) = 0. \label{afe52}
\end{equation}

Once more, taking the differential of (\ref{afe52}):
\begin{equation}
\renewcommand{\arraystretch}{1.3}
\begin{array}{ll}
&[(x + y)x + x - y]\, G'' (g)\, dg - H''(x)\, dx +  (1 + x + y) \,K''(y)\, dy \\+& [(2x + y + 1) \, dx + (x - 1) \, dy] \, G' (g) + (dx + dy)\, K' (y) = 0, \label{afe53}
\end{array}
\renewcommand{\arraystretch}{1}
\end{equation}
and the wedge product of (\ref{afe53}) with $dx$, we eliminate $H (x)$:
\begin{equation}
x\, [(x + y)x + x - y]\, G'' (g) + (x - 1) \, G'(g) + K''(y)  + K' (y) = 0. \label{afe54}
\end{equation}

Taking the differential of (\ref{afe54}):
\begin{equation}
\renewcommand{\arraystretch}{1.3}
\begin{array}{ll}
x\, [(x + y)x + x - y]\, G''' (g)\, dg + (x - 1) \, G''(g)\, dg\\ + K'''(y)\, dy  + K'' (y)\, dy + G'' (g) \, dx = 0, \label{afe55}
\end{array}
\renewcommand{\arraystretch}{1}
\end{equation}
and the wedge product of (\ref{afe55}) with $dy$, we eliminate $K (y)$:
\begin{equation}
g\, (x^2 + g + x - \frac{g}{x}]\, G''' (g) + (3g - \frac{2g}{x} + 3x^2 + 2x + 1) \, G''(g) = 0. \label{afe56}
\end{equation}

Equation (\ref{afe56}) is equivalent to the system
\begin{equation*}
\renewcommand{\arraystretch}{1.3}
\left\{
\begin{array}{ll}
g^2 G''' (g) + (3g+1)\,G''(g) = 0, \\
G'' (g) = 0, \\
G'''(g) = 0,
\end{array}
\right.
\renewcommand{\arraystretch}{1}
\end{equation*}
i.e., to the equation $G'' (g) = 0$. Up to an arbitrary constant, the solution of the latter equation is
$G = ag$, where $a$ is an arbitrary constant.

If $G = ag$, then equation (\ref{afe54}) becomes
$$
K'' (y) + K' (y) + a\, (x-1) = 0.
$$
It follows that $a = 0$ and $G (g) = 0$. The equation for $K (y)$ becomes
$K'' (y) + K' (y) = 0$. Up to an arbitrary constant, its solution is $K (y) = -b e ^{-y}$,
where $b$ is an arbitrary constant.

Now equation (\ref{afe52}) becomes
$$
H' (x)= b(1+y) e^{-y} + b\,xe^{-y}.
$$
It follows that $b = 0$ and $H' (x) = 0$. Hence, up to an arbitrary constant,  $H (x) = 0$ and $K (y) = 0$.

Finally equation (\ref{afe5}) implies that $F (f) = 0$.

Thus, the web under consideration
admits no abelian relations.

\section{Abelian Differential Equations}

 In this section we discuss properties of abelian equations.

 Recall that the abelian equation for a planar $d$-web given by differential $1$-forms
$$
\omega _{1},...,\omega _{d}
$$
is a first-order PDE system for functions $\lambda _{1},...,\lambda _{d}$ of the form
\begin{eqnarray*}
&&\lambda _{1}\omega _{1}+\cdots +\lambda _{d}\omega _{d}=0, \\
&&d\left( \lambda _{1}\omega _{1}\right) =\cdots =d\left( \lambda _{d}\omega
_{d}\right) =0.
\end{eqnarray*}

Let us write down the abelian equation in more explicit form. To this end, we choose a $3$-subweb, say, the $3$-web given by
$$
\omega_{1},\omega _{2},\omega _{3},
$$
and normalize the $d$-web as it was done earlier:
\begin{equation*}
a_{1}\omega _{1}+\omega _{2}+\omega _{3}=0,\ \;a_{2}\omega _{1}+\omega
_{2}+\omega _{4}=0,....,\;a_{d-2}\omega _{1}+\omega _{2}+\omega _{d}=0,
\end{equation*}
with $a_{1}=1$ and  $\ d\omega _{3}=0.$

It is easy to see that, if $i\leq 3$, then, due to the structure equations, we get
\[
d\left( \lambda \omega _{i}\right) =d\lambda \wedge \omega _{i}+\lambda
d\omega _{i}=\left( d\lambda -\lambda \gamma \right) \wedge \omega _{i}
\]
or
\[
d\left( \lambda \omega _{i}\right) =\delta \left( \lambda \right) \wedge
\omega _{i},
\]
if we consider $\lambda $ as a function of weight one.

Assuming that all $\lambda_{i}$ are functions of weight one and the functions $a_{i}$ are of weight $0$, we get
\begin{eqnarray*}
\renewcommand{\arraystretch}{1.3}
d\left( \lambda _{1}\omega _{1}\right)  &=&-\delta _{2}\left( \lambda
_{1}\right) \omega _{1}\wedge \omega _{2},\  \\
d\left( \lambda _{2}\omega _{2}\right)  &=&\delta _{1}\left( \lambda
_{1}\right) \omega _{1}\wedge \omega _{2}, \\
d\left( \lambda _{3}\omega _{3}\right)  &=&\left( \delta _{2}\left( \lambda
_{3}\right) -\delta _{1}\left( \lambda _{3}\right) \right) \omega _{1}\wedge
\omega _{2}, \\
d\left( \lambda _{i}\omega _{i}\right)  &=&\left( \delta _{2}\left(
a_{i-2}\lambda _{i}\right) -\delta _{1}\left( \lambda _{i}\right) \right)
\omega _{1}\wedge \omega _{2}
\renewcommand{\arraystretch}{1}
\end{eqnarray*}%
for $i=4,...,d.$

The normalization condition $\sum_{1}^{d}\lambda _{i}\omega _{i}=0$ implies
that
\begin{eqnarray*}
\renewcommand{\arraystretch}{1.3}
\lambda _{1} &=&a_{1}u_{1}+\cdots +a_{d-2}u_{d-2}, \\
\lambda _{2} &=&u_{1}+\cdots +u_{d-2},
\renewcommand{\arraystretch}{1}
\end{eqnarray*}
where
$$
u_{1}=\lambda_{3}, \dots , u_{d-2}=\lambda_{d}.
$$

Therefore the abelian equation can be written in the explicit form as the following PDE system:
\begin{align*}
\renewcommand{\arraystretch}{1.3}
& \Delta _{1}\left( u_{1}\right) =\cdots =\Delta _{d-2}\left( u_{d-2}\right)
=0, \\
& \delta _{1}\left( u_{1}\right) +\cdots +\delta _{1}\left( u_{d-2}\right)
=0,
\renewcommand{\arraystretch}{1}
\end{align*}%
where $\Delta _{i}=\delta _{1}-\delta _{2}\circ a_{i}.$

Let
$$
\pi :\mathbb{R}^{d-2}\times\mathbb{R}^{2}\longrightarrow\mathbb{R}^{2}
$$
be the trivial vector bundle, where $\pi: (u_{1},...,u_{d-2},x,y)\mapsto (x,y)$.

Denote by $\mathfrak{A}_{1}\subset \mathbf{J}^{1}\left( \pi \right) $  the
subbundle of the $1$-jet bundle corresponding to the abelian equation, and by $\mathfrak{A}_{k}\subset \mathbf{J}^{k}\left( \pi \right) $ the $\left(
k-1\right) $-prolongation of $\mathfrak{A}_{1}.$

Let
$$
\pi _{k,k-1}:\mathfrak{A}_{k}\longrightarrow \mathfrak{A}_{k-1}
$$
be the restrictions of the natural jet projections
$$
\pi _{k,k-1}:\mathbf{J}^{k}\left( \pi \right) \longrightarrow \mathbf{J}^{k-1}\left( \pi \right) .
$$

Then, if $k\leq d-2$, one can easily check that  $\mathfrak{A}_{k}$  are vector bundles, the maps $\pi _{k,k-1}$ are projections and
$$
\dim \ker \pi _{k,k-1}=d-k-2.
$$

In other words, we have the following tower of vector bundles:
\begin{equation*}
\mathbb{R}^2\overset{\pi }{\longleftarrow }\mathbb{R}^{d+2}\overset{\pi _{1,0}}{%
\longleftarrow }\mathfrak{A}_{1}\overset{\pi _{2,1}}{\longleftarrow }%
\mathfrak{A}_{2}\overset{\pi _{3,1}}{\longleftarrow }\cdots \overset{\pi
_{d-3,d-4}}{\longleftarrow }\mathfrak{A}_{d-3}\overset{\pi _{d-2,d-3}}{%
\longleftarrow }\mathfrak{A}_{d-2}.
\end{equation*}%
The last projection
$$
\pi _{k,k-1}:\mathfrak{A}_{d-2}\longrightarrow \mathfrak{A}_{d-3}
$$
is an isomorphism, and geometrically it can be viewed as a linear Cartan connection (see \cite{Ly95}) in the vector bundle
$$
\pi_{d-3}:\mathfrak{A}_{d-3}\rightarrow \mathbb{R}^2.
$$
This proves that \emph{the abelian equation is formally integrable if and only if this linear connection is flat}.

It is easy to see that the dimension of this bundle is equal to $(d-2)(d-1)/2.$

The dimension of the solution space is the rank of the corresponding $d$-web. The above computation shows that the rank of a $d$-web is finite-dimensional and does not exceed
$$
\frac{(d-1)(d-2)}{2}.
$$

This  result was first established by Bol \cite{bo32} (see also \cite{B55}).

The compatibility conditions for the abelian equation can be found (see \cite{GL07}) by use of multi-brackets (see \cite{KL02}).

These conditions have the form
\begin{equation*}
\varkappa =\square _{1}u_{1}+\cdots +\square _{d-2}u_{d-2}=0,
\end{equation*}%
where
\begin{equation*}
\square _{i}=\Delta _{1}\cdots \Delta _{d-2}\cdot \delta _{1}-\Delta
_{1}\cdots \Delta _{i-1}\cdot \delta _{1}\cdot \Delta _{i+1}\cdots \Delta
_{d-2}\cdot \Delta _{i}
\end{equation*}%
are linear differential operators of order not exceeding $d-2$.

Summarizing, we get the following

\begin{theorem}
A $d$-web is of maximum rank if and only if $\varkappa =0$ on $\mathfrak{A}_{d-2}.$
\end{theorem}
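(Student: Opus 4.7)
The plan is to leverage the tower of prolongations already set up in the preceding discussion and identify maximum rank with formal integrability of the abelian system, then identify the latter with the vanishing of the obstruction $\varkappa$ on $\mathfrak{A}_{d-2}$. The key structural input is that the fiber dimension of $\mathfrak{A}_{d-2}\to\mathbb{R}^2$ is exactly $(d-1)(d-2)/2$ and that the last projection $\pi_{d-2,d-3}$ is an isomorphism, so $\mathfrak{A}_{d-3}$ carries a linear Cartan connection whose parallel sections correspond bijectively to solutions of the abelian equation.

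First I would make the translation between solutions and parallel sections precise. Any solution $(u_{1},\dots,u_{d-2})$ of the abelian PDE system lifts canonically to a section of $\mathfrak{A}_{d-3}\to\mathbb{R}^{2}$, and conversely a section of $\mathfrak{A}_{d-3}$ is the $(d-3)$-jet of a solution iff it is parallel with respect to the Cartan connection induced by $\pi_{d-2,d-3}^{-1}$. Thus the solution space has dimension at most $\dim \mathfrak{A}_{d-3}=(d-1)(d-2)/2$, and it attains this bound precisely when through every point of $\mathfrak{A}_{d-3}$ there passes a parallel section, i.e. exactly when the linear connection is flat. By the remark in the excerpt, flatness is equivalent to formal integrability of the abelian equation.

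Next I would identify $\varkappa=0$ on $\mathfrak{A}_{d-2}$ with this flatness. The operators $\square_{i}=\Delta_{1}\cdots\Delta_{d-2}\cdot\delta_{1}-\Delta_{1}\cdots\Delta_{i-1}\cdot\delta_{1}\cdot\Delta_{i+1}\cdots\Delta_{d-2}\cdot\Delta_{i}$ are built from commutators of the defining operators $\Delta_{i}$ and $\delta_{1}$, and by the multi-bracket construction of \cite{KL02,GL07} they represent the full system of compatibility conditions of order $\le d-2$. On any prolongation-compatible jet these operators evaluate to the differential consequences of the abelian system modulo the equations themselves; so a point of $\mathfrak{A}_{d-2}$ extends to a point of $\mathfrak{A}_{d-1}$ exactly when the combination $\varkappa=\sum \square_{i}u_{i}$ vanishes at it. If $\varkappa=0$ identically on $\mathfrak{A}_{d-2}$ then all projections $\pi_{k+1,k}$ are onto for $k\ge d-2$, which (combined with the fact that $\pi_{d-2,d-3}$ is an isomorphism) yields formal integrability and hence maximum rank. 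Conversely, if the $d$-web has maximum rank, the evaluation map from the solution space to each fiber of $\mathfrak{A}_{d-3}$ is surjective, and differentiating the identity $\varkappa(u)=0$ along solutions forces $\varkappa$ to vanish on the image in $\mathfrak{A}_{d-2}$, which by surjectivity is all of $\mathfrak{A}_{d-2}$.

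The main obstacle will be the second step: justifying rigorously that the particular operator $\varkappa$, manufactured from the multi-bracket, exhausts the compatibility conditions at the $(d-2)$-th prolongation. One must check that no independent compatibility arises at lower order (otherwise $\varkappa=0$ on $\mathfrak{A}_{d-2}$ would not suffice) and that no new condition appears beyond order $d-2$ (this is where the isomorphism $\pi_{d-2,d-3}$ is essential, closing the prolongation process). The remaining verifications — that $\varkappa$ is $C^\infty(M)$-linear on fibers and hence defines a tensorial obstruction on $\mathfrak{A}_{d-2}$, and the straightforward dimension count $\dim\mathfrak{A}_{d-3}=(d-1)(d-2)/2$ from the tower — are routine consequences of the construction already displayed before the statement.
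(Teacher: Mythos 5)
Your proposal follows essentially the same route as the paper: the theorem there is stated as a summary of the preceding construction (the prolongation tower, the identification of $\pi_{d-2,d-3}$ with a linear Cartan connection on $\mathfrak{A}_{d-3}$, the equivalence of maximum rank with flatness/formal integrability, and the multi-bracket obstruction $\varkappa$), with the completeness of $\varkappa$ as the full set of compatibility conditions deferred to \cite{GL07} and \cite{KL02} exactly as you do. Your argument is correct, and you have rightly flagged the only nontrivial point --- that $\varkappa$ exhausts the compatibility conditions at order $d-2$ --- which is precisely what the cited references supply.
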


Remark that $\varkappa $ can be viewed as a linear function on the vector
bundle $\mathfrak{A}_{d-2},$ and therefore the above theorem imposes $%
(d-1)\left( d-2\right) /2$ conditions on the $d$-web (or on $d-2$ web
functions) in order the web has the maximum rank. A calculation of these
conditions is pure algebraic, and we shall illustrate this calculation below
for planar $3$-,\ $4$- and $5$-webs. Note also
that expressions for $\varkappa $ in the case of general $d$-webs are
extremely cumbersome while for concrete $d$-webs it is not the case.

\section{Rank of $\mathbf{4}$-Webs}

\subsection{The Obstruction}

In order to simplify notations, we put $a_{2}=a$ in the normalization for $4$-webs :
\begin{eqnarray*}
\omega _{1}+\omega _{2}+\omega _{3} =0,\\
a \omega _{1}+\omega _{2}+\omega _{4} =0,
\end{eqnarray*}
and reserve the subscripts for the covariant derivatives of $a.$
Thus, $a_{2}=\delta _{2}\left( a\right) $, etc.

For abelian equations we shall use the functions $u,v$, where $u=u_{1}, v=u_{2}$.

Then the abelian equations have the form
\begin{equation*}
\left( u+av\right) \omega _{1}+\left( u+v\right) \omega _{2}+u\omega
_{3}+v\omega _{4}=0,
\end{equation*}%
where
$$
\lambda _{1}=u+av,\lambda _{2}=u+v,\lambda _{3}=u,\lambda _{4}=v,
$$
and the functions $u$ and $v$ satisfy the equations
\begin{equation*}
\renewcommand{\arraystretch}{1.3}
\begin{array}{ll}
\delta_{1}\left( u\right) -\delta _{2}\left( u\right) =0,\\
\delta_{1}\left(v\right) -\delta _{2}\left( av\right) =0,\\
\delta_{1}\left( u\right) +\delta_{1}\left( v\right) =0.
\end{array}
\renewcommand{\arraystretch}{1}
\end{equation*}

In the case of $4$-webs, the tower of prolongations has the form
\begin{equation*}
\mathbb{R}^{2}\overset{\pi }{\longleftarrow }\mathbb{R}^{4}\overset{\pi _{1,0}}{%
\longleftarrow }\mathfrak{A}_{1}\overset{\pi _{2,1}}{\longleftarrow }%
\mathfrak{A}_{2},
\end{equation*}%
where the isomorphism $\pi_{2,1}:\mathfrak{A}_{2}\rightarrow \mathfrak{A}_{1}$ defines a linear Cartan connection on the three-dimensional vector bundle
$$
\pi _{1}:\mathfrak{A}_{1}\rightarrow \mathbb{R}^2.
$$

In what follows, we use coordinates in the jet spaces adjusted to the Chern connection and weight. Thus, for example,  $u_{k,l}$ stands for the operator $\delta_{1}^{k}\delta_{2}^{l}$.

In these coordinates, the abelian equation takes the following form:
\begin{equation*}
\renewcommand{\arraystretch}{1.3}
\begin{array}{ll}
u_{1}-u_{2}=0,\\
v_{1}-av_{2}-a_{2}v=0,\\
u_{1}+v_{1}=0,
\end{array}
\renewcommand{\arraystretch}{1}
\end{equation*}
and the obstruction
\begin{equation*}
\varkappa =(\Delta _{1}\Delta _{2}\delta _{1}-\delta _{1}\Delta _{1}\Delta
_{2})u+(\Delta _{1}\Delta _{2}\delta _{1}-\Delta _{1}\delta _{1}\Delta _{2})v
\end{equation*}%
equals $$
\varkappa =c_{0}v_{2}+c_{1}v+c_{2}u ,
$$
where $c_0, c_1,$ and $c_2$ are certain functions of the curvature function $K$, the basic
invariant $a$ and their covariant derivatives $K_i$ and $a_i, a_{ij}$ $($see formula $(1)$ in
\cite{GL07}).

The coefficient $c_{0}$ in the expression of $\varkappa $ has an intrinsic
geometric meaning.

Namely, by the \emph{curvature function} of a $4$-web we mean the arithmetic mean of the curvatures of its $3$-subwebs $[1,2,3],\;[1,2,4],\;[1,3,4]$ and $[2,3,4]$.

Then (see \cite{GL07}) the coefficient $c_{0}$ equals the curvature function of the $4$-web.

\subsection{$\mathbf{4}$-Webs of Maximum Rank}

A planar $4$-web has the maximum rank three if and only if the obstruction $%
\varkappa $ identically equals zero, i.e., if and only if $c_{0}=c_{1}=c_{2}=0.$
Computing these coefficients leads us to the following result (see \cite{GL07}).

\begin{theorem}
\label{4webmaxrank}A planar $4$-web is of maximum rank if and
only if  the following relations hold:
\begin{equation*}
\renewcommand{\arraystretch}{2}
\begin{array}{lll}
c_{0} &=&K+\displaystyle\frac{a_{11}-aa_{22}-2\left( 1-a\right) a_{12}}{4a(1-a)}+\frac{%
\left( -1+2a\right) a_{1}^{2}-a^{2}a_{2}^{2}+2\left( 1-a\right)
^{2}a_{1}a_{2}}{4\left( 1-a\right) ^{2}a^{2}}, \\
c_{1} &=&\displaystyle\frac{K_{2}-K_{1}}{4(1-a)}+\frac{\left( a-4\right) a_{1}+\left(
11-20a+12a^{2}\right) a_{2}}{12\left( 1-a\right) ^{2}a}K+\frac{%
a_{112}-a_{122}}{4a(1-a)} \\
&&+\displaystyle\frac{a_{1}-aa_{2}}{4a^{2}(1-a)}a_{22}+\frac{\left( 2a-1\right) \left(
a_{1}-aa_{2}\right) }{4\left( 1-a\right) ^{2}a^{2}}a_{12}-\frac{%
a_{2}^{2}\left( \left( 1-2a\right) a_{1}+aa_{2}\right) }{4\left( 1-a\right)
^{2}a^{2}}, \\
c_{2} &=&\displaystyle\frac{aK_{2}-K_{1}}{4a(1-a)}+\frac{\left( 1-2a\right) a_{1}-\left(
a-2\right) aa_{2}}{4\left( 1-a\right) ^{2}a^{2}}K.
\end{array}
\renewcommand{\arraystretch}{1}
\end{equation*}
\end{theorem}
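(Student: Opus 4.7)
The plan is to take as given the earlier derivation which puts the obstruction in the form $\varkappa = c_0 v_2 + c_1 v + c_2 u$ on the prolongation $\mathfrak{A}_2$. Since $\pi_{2,1}:\mathfrak{A}_2\rightarrow\mathfrak{A}_1$ is an isomorphism and $(u,v,v_2)$ serve as independent fiber coordinates on $\mathfrak{A}_1$ (once the abelian equation is solved for $u_1,u_2,v_1$), the vanishing of $\varkappa$ on $\mathfrak{A}_2$ is equivalent to $c_0=c_1=c_2=0$. By Theorem~1 this is precisely the maximum-rank condition, so only the explicit computation of $c_0,c_1,c_2$ remains.

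First I would reduce the jet coordinates. The abelian equation
\[
u_1-u_2=0,\qquad v_1-av_2-a_2v=0,\qquad u_1+v_1=0
\]
gives at once $u_1=u_2=-v_1=-(av_2+a_2 v)$, so every first-order jet of $u$ and $v$ is expressed in terms of $u,v,v_2$ and $a,a_2$. Applying $\delta_1$ and $\delta_2$ to these relations, using the Leibniz rule with the appropriate weights ($u,v$ of weight one, $a$ of weight zero, so $u_i,v_i$ of weight two, etc.) and invoking the commutation relation~(\ref{covariant commutator}) with the correct weight $s$ whenever $\delta_1$ and $\delta_2$ have to be swapped, I obtain expressions for all second covariant derivatives of $u,v$ other than $v_{22}$ in terms of the coordinates $u,v,v_2$ together with $a,a_1,a_2,a_{11},a_{12},a_{22}$ and $K$.

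Second, I would expand the defining expression
\[
\varkappa=(\Delta_1\Delta_2\delta_1-\delta_1\Delta_1\Delta_2)(u)+(\Delta_1\Delta_2\delta_1-\Delta_1\delta_1\Delta_2)(v),
\]
with $\Delta_1=\delta_1-\delta_2$ (since $a_1=1$) and $\Delta_2=\delta_1-\delta_2\circ a$. Opening the brackets, distributing the $a$-factor via Leibniz, and systematically commuting $\delta_1$ with $\delta_2$ produces a linear combination of covariant derivatives of $u$ and $v$ up to order three, with coefficients in $a,a_i,a_{ij},a_{ijk},K,K_1,K_2$. Substituting the reductions from the first step collapses everything to a linear combination of $u,v,v_2$, and the three coefficients read off are the $c_0,c_1,c_2$ of the statement. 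The identification of $c_0$ as the mean curvature of the four $3$-subwebs is a separate verification: one writes down the curvatures of $[1,2,3],[1,2,4],[1,3,4],[2,3,4]$ via~(\ref{curvature equation for web}) in terms of $a$ and $K$ and checks that their arithmetic mean matches the expression for $c_0$.

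The main obstacle is the volume and delicacy of the bookkeeping rather than any conceptual difficulty. One must track weights at every application of $\delta_i$ (an incorrect weight produces spurious $g_i$-terms through the formula $\delta_i^{(k)}(u)=\partial_i u-kg_iu$); one must apply the commutator $\delta_2\delta_1-\delta_1\delta_2=sK$ with the right $s$ at each of several occurrences; and one must eliminate any third-order jet of $u$ or $v$ that appears by prolonging the abelian equation before attempting to collect coefficients. Once this is organized systematically, the displayed formulas for $c_0,c_1,c_2$ emerge, and the equivalence in the theorem follows immediately from the reduction described in the opening paragraph.
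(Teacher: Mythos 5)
Your proposal follows the paper's own route exactly: invoke Theorem 1 to reduce maximality of rank to the identical vanishing of $\varkappa=c_0v_2+c_1v+c_2u$ on $\mathfrak{A}_2$ (hence to $c_0=c_1=c_2=0$, since $u,v,v_2$ are independent fiber coordinates), and then obtain the displayed formulas by expanding the multi-bracket expression for $\varkappa$ with $\Delta_1=\delta_1-\delta_2$, $\Delta_2=\delta_1-\delta_2\circ a$ and reducing modulo the prolonged abelian equation. The paper likewise leaves the coefficient computation to the cited companion work, so your outline matches it at the same level of detail and is correct.
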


Vanishing of the coefficients $c_{1}$ and $c_{2}$ for $4$-webs with $c_{0}=0$ is equivalent to linearizability of the web (see \cite{AGL04}). Therefore the above theorem can be formulated in pure geometric terms:

\begin{theorem}
\label{4webmaxrank3}A $4$-web is of maximum rank three if and only if it
is linearizable and its curvature vanishes.
\end{theorem}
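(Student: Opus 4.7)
The plan is to derive this geometric reformulation directly from the algebraic criterion of Theorem~\ref{4webmaxrank} by interpreting each of the three scalar conditions $c_0=c_1=c_2=0$ in invariant terms. By Theorem~\ref{4webmaxrank}, the $4$-web has maximum rank three if and only if all three coefficients vanish, so the task reduces to recognizing the joint system $\{c_0=0\}$ and $\{c_1=c_2=0\}$ as the two geometric hypotheses in the statement.

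First, I would handle the coefficient $c_0$. This was already flagged before Theorem~\ref{4webmaxrank}: $c_0$ equals the arithmetic mean of the curvatures of the four $3$-subwebs $[1,2,3]$, $[1,2,4]$, $[1,3,4]$, $[2,3,4]$, which by definition is the curvature function of the $4$-web. Thus $c_0=0$ is precisely the vanishing of the $4$-web curvature. To make this identification rigorous, one can compute the Chern curvature of each $3$-subweb using the structure equations~(\ref{web structure equations}) together with the normalizations $a_1\omega_1+\omega_2+\omega_3=0$ and $a\omega_1+\omega_2+\omega_4=0$, expressing each curvature as a polynomial in $K$, $a$, and the covariant derivatives $a_i,a_{ij}$, and then verifying that the average matches the explicit $c_0$ displayed in Theorem~\ref{4webmaxrank}.

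Second, I would invoke the linearizability criterion cited from \cite{AGL04}: under the assumption $c_0=0$, the simultaneous vanishing of $c_1$ and $c_2$ is equivalent to the $4$-web being linearizable. Combining the two equivalences gives
\begin{equation*}
\text{max rank three} \iff c_0=c_1=c_2=0 \iff \text{curvature}=0 \text{ and } 4\text{-web is linearizable},
\end{equation*}
which is the claim.

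I expect the main obstacle to be the second step, the linearizability criterion, since it is not derived in this paper at all, only quoted. The secondary technical point is matching $c_0$ with the mean curvature of the $3$-subwebs: this is a direct but tedious computation using (\ref{curvature equation for web}) for each subweb, and one must be careful to keep track of the weights of the covariant derivatives, since the relative-invariant nature of the curvature functions of the individual $3$-subwebs is essential for their sum to transform correctly as an invariant of the full $4$-web.
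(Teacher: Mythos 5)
Your proposal matches the paper's own argument: the paper obtains Theorem~\ref{4webmaxrank3} exactly by combining Theorem~\ref{4webmaxrank} with the identification of $c_0$ as the curvature function of the $4$-web (the arithmetic mean of the curvatures of its four $3$-subwebs) and with the quoted criterion from \cite{AGL04} that, when $c_0=0$, the vanishing of $c_1$ and $c_2$ is equivalent to linearizability. You also correctly identify that the linearizability step is only cited, not proved, in this paper.
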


Theorem \ref{4webmaxrank3} leads to interesting results in web geometry.
\begin{enumerate}
\item  A linearizable planar 4-web is of
maximum rank if and only if its curvature vanishes.
\item  A planar 4-web of maximum rank is linearizable (algebraizable) (Poincar\'{e}).
\item  If a planar 4-web\ with a constant basic invariant $a$ has maximum rank, then it is parallelizable.
\item Parallelizable planar $4$-webs have maximum rank.
\item The Mayrhofer  4-webs are of maximum rank.
\end{enumerate}

Recall that a 4-web is called the \emph{Mayrhofer} web if all $3$-subwebs of this web are parallelizable.

\subsection{4-Webs of Maximum Rank and Surfaces of Double Translation}

A surface $S\subset\mathbb{R}^{3}$ is a \emph{surface of translation} in if it admits a vector parametric representation $r=R(u,v)$, where $R(u,v)$ is a solution of the wave equation
$$
R_{uv}=0.
$$
Then
\begin{equation}
r=f(u)+g(v),
\label{1st repr}
\end{equation}
or, in components of vectors
\begin{equation*}
\renewcommand{\arraystretch}{1.3}
\left\{
\begin{array}{ll}
x = f^1 (u) + g^1 (v),\\
y = f^2 (u) + g^2 (v),\\
z= f^3 (u) + g^3 (v).
\end{array}
\right.
\renewcommand{\arraystretch}{1}
\end{equation*}

A surface $S$ is a \emph{surface of double translation}
if in addition to representation (\ref{1st repr}) it
also admits a representation
\begin{equation}
r=h(s)+k(t),
\label{2nd repr}
\end{equation}
such that the coordinate functions  $u,v,s,t$ on the surface are pairwise independent.

In other words, they define a 4-web on the surface $S$.
If $S$ is a surface of double translation, then it follows from  (\ref{1st repr}) and (\ref{2nd repr}) that
\begin{equation}
f^i (u) + g^i (v) - h^i (s) - k^i (t)=0
\label{double transl}
\end{equation}
for $i=1,2,3.$
These relations can be viewed as abelian relations for the 4-web mentioned above.

If the surface $S$ does not belong to a plane, then (\ref{double transl}) gives three independent abelian relations for the web. Therefore, this web has the maximal rank, and as we have seen earlier, it is linearizable (algebraizable). This result was first proved by Sophus Lie in the form.

\begin{theorem} {\rm(\cite{Lie82})}
If $S$ is a surface of double translation not belonging to a plane, then the curves $f' (u), g'(v), h'(s)$ and $k'(t)$ belong to an algebraic curve of degree four.
\end{theorem}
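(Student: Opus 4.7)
My strategy is to invoke Poincar\'e's algebraization theorem for planar $4$-webs of maximum rank, which is item~(2) in the list following Theorem~\ref{4webmaxrank3}. The argument reduces to a rank count together with an interpretation of the algebraization in $\mathbb{R}^{3}$.

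First, I would extract three abelian relations directly from the double translation structure. Equating components of $r=f(u)+g(v)=h(s)+k(t)$ gives
\begin{equation*}
f^{i}(u)+g^{i}(v)-h^{i}(s)-k^{i}(t)=0,\qquad i=1,2,3,
\end{equation*}
each of which is an abelian relation for the $4$-web on $S$ whose foliations are the level sets of $u,v,s,t$. The hypothesis that $S$ is not contained in a plane is equivalent to the linear independence of these three relations: any nontrivial linear combination vanishing identically in $(u,v,s,t)$ would exhibit $S$ as the zero set of a linear function of $r$, i.e., inside an affine plane. By Bol's bound, the rank of a planar $4$-web satisfies $\rho \le (4-1)(4-2)/2 = 3$, so this $4$-web has maximum rank three.

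Next, by Poincar\'e's algebraization theorem (consequence~(2) of Theorem~\ref{4webmaxrank3}), there is a local diffeomorphism identifying the web with the $4$-web dual to an algebraic curve $\Gamma$ of degree four, whose four local branches correspond to the four foliations $u,v,s,t=\mathrm{const}$. At each point of $S$, the tangent directions to the four foliations are precisely the vectors $f'(u), g'(v), h'(s), k'(t)$; under algebraization these become the tangent directions at four points of $\Gamma$. I would then realize $\Gamma$ as a concrete algebraic curve in $\mathbb{R}^{3}$ by embedding it via the three holomorphic differentials on $\Gamma$ matched with the three components of the vector abelian relation $f+g-h-k=0$, so that affine coordinates on $\mathbb{R}^{3}$ are recovered as the corresponding abelian integrals. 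The four arcs $u\mapsto f'(u)$, $v\mapsto g'(v)$, $s\mapsto h'(s)$, $t\mapsto k'(t)$ then appear as parametrizations of arcs of the same algebraic curve of degree four in $\mathbb{R}^{3}$.

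The routine part is the rank count; the substantive difficulty is the last identification, namely verifying that the abstract quartic produced by Poincar\'e's theorem is realized concretely in the $\mathbb{R}^{3}$ containing the tangent-vector curves and that this realization preserves degree four. Carrying this out requires matching the three holomorphic differentials on $\Gamma$ with the three component abelian relations derived from $r$, so that integration of these differentials along $\Gamma$ recovers the three affine coordinates of $\mathbb{R}^{3}$ and deposits the four arcs $f', g', h', k'$ on a single degree-four algebraic curve.
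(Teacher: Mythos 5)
Your proposal follows essentially the same route as the paper: the paper's own justification is exactly the paragraph preceding the theorem, namely that the three component relations $f^i(u)+g^i(v)-h^i(s)-k^i(t)=0$ are independent abelian relations precisely when $S$ is not planar, so by Bol's bound the $4$-web of the foliations $u,v,s,t=\mathrm{const}$ has maximal rank three and is therefore algebraizable by Poincar\'e's theorem (item~(2) after Theorem~\ref{4webmaxrank3}). Your additional sketch of how the abstract quartic is realized via the matching of the three abelian relations with the holomorphic differentials is a correct elaboration of a step the paper leaves implicit (citing \cite{Lie82}), so there is nothing to object to.
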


More on the subject, its further developments and references one can find in \cite{C82} and \cite{AG00}.

\subsection{$\mathbf{4}$-Webs of Rank Two}

As we have seen, a $4$-web admits an abelian equation (has a
positive rank) if and only if the equation
\begin{equation}
c_{0}v_{2}+c_{1}v+c_{2}u=0  \label{eq4}
\end{equation}%
has a nonzero solution.

Suppose that  $c_{0}=0$. Then if two other coefficients $c_{1}=c_{2}=0$,
then a $4$-web is of maximum rank three. If $c_{0}=0$ but one of the coefficients $%
c_{1}$ or $c_{2}$ is not $0,$ then $c_{1}v+c_{2}u=0$ and then,
say $u,$ satisfies a first-order PDE system of two equations. Therefore,
the $4$-web admits not more than one abelian equation (i.e., it is of
rank one or zero).

Assume that $c_{0}\neq 0.$  Then we can find all first derivatives $u_{i},v_{j}$ from the abelian equation and  (\ref{eq4}):

\begin{equation*}
\renewcommand{\arraystretch}{2}
\begin{array}{ll}
u_{1}=&\displaystyle\frac{a c_{1}-a_{2}c_{0}}{c_{0}}v+\frac{ac_{2}}{c_{0}}u,\\
u_{2}=&\displaystyle\frac{a c_{1}-a_{2}c_{0}}{c_{0}}v+\frac{ac_{2}}{c_{0}}u,\\
v_{1}=&\displaystyle\frac{a_{2}c_{0}-a c_{1}}{c_{0}}v-\frac{ac_{2}}{c_{0}}u,\\
v_{2}=&-\displaystyle\frac{c_{1}}{c_{0}}v-\frac{c_{2}}{c_{0}}u.
\end{array}
\renewcommand{\arraystretch}{1}
\end{equation*}
Therefore, a 4-web has rank two if and only if the above system is compatible.

\begin{theorem}
\label{rank2}A planar $4$-web is of rank two if and only if $c_{0}\neq 0,$
and
\begin{equation}
G_{ij}=0, \;i,j=1,2,  \label{cond for rank 2}
\end{equation}%
where%
\begin{eqnarray*}
G_{11} &=&ac_{0}(c_{2,2}-c_{2,1})+ac_{2}(c_{0,1}-c_{0,2})-a\left( 1-a\right)
c_{1}c_{2} \\
&&+\left( 2a_{2}-a_{1}-aa_{2}\right) c_{0}c_{2}-Kc_{0}^{2}, \\
G_{12} &=&ac_{0}(c_{1,2}-c_{1,1})+ac_{1}(c_{0,1}-c_{0,2})-a\left( 1-a\right)
c_{1}^{2} \\
&&+\left( 2a_{2}-a_{1}-2aa_{2}\right) c_{0}c_{1}+\left(
a_{2}^{2}+a_{12}-a_{22}\right) c_{0}^{2}, \\
G_{21}
&=&c_{0}(c_{2,1}-ac_{2,2})+c_{2}(ac_{0,2}-c_{0,1})-2a_{2}c_{0}c_{2}+a\left(
1-a\right) c_{2}^{2}, \\
G_{22} &=&c_{0}(c_{1,1}-ac_{1,2})+c_{1}(ac_{0,2}-c_{0,1})+a\left( 1-a\right)
c_{1}c_{2}-a_{2}c_{0}c_{1} \\
&&-a_{2}(1-a)c_{0}c_{2}+\left( a_{22}-K\right) c_{0}^{2}.
\end{eqnarray*}
\end{theorem}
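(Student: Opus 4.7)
The plan is to reduce the problem to a first-order Frobenius-type system for the two unknowns $u,v$ and extract its integrability conditions via the Chern-connection commutator.

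First, I dispose of the case $c_0 = 0$. The discussion preceding the theorem already shows that $c_0 = c_1 = c_2 = 0$ forces the maximum rank three, while $c_0 = 0$ with $(c_1,c_2) \neq (0,0)$ makes the obstruction algebraic in $u,v$ and pins the rank down to at most one. Hence every $4$-web of rank exactly two must satisfy $c_0 \neq 0$. Under this assumption I combine the three abelian equations
\[
u_1 - u_2 = 0, \qquad v_1 - a v_2 - a_2 v = 0, \qquad u_1 + v_1 = 0,
\]
with the obstruction $c_0 v_2 + c_1 v + c_2 u = 0$ and solve algebraically for the four first covariant derivatives $u_1, u_2, v_1, v_2$ as explicit linear combinations of $u$ and $v$; these are precisely the four formulas displayed just before the theorem. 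The abelian system thereby becomes a closed first-order system on a rank-two vector bundle with fiber coordinates $(u,v)$: its solutions are the abelian relations, there are at most two linearly independent such solutions, and there are exactly two iff the associated linear connection is flat.

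Flatness is tested by applying the covariant commutation relation (\ref{covariant commutator}) to $u$ and to $v$, each of weight one:
\[
\delta_2 \delta_1 u - \delta_1 \delta_2 u = K u, \qquad \delta_2 \delta_1 v - \delta_1 \delta_2 v = K v.
\]
Into the left-hand sides I substitute the four formulas for $u_1,u_2,v_1,v_2$, expand via the Leibniz rule, and replace any first derivatives of $u,v$ that reappear by substituting those same formulas once more. Each of the two resulting identities becomes a linear relation in $u,v$ whose coefficients are built from $a$ and its covariant derivatives, the curvature $K$, and $c_0,c_1,c_2$ together with their first covariant derivatives. Since $(u,v)$ may be prescribed arbitrarily at a single point of the domain, compatibility is equivalent to the simultaneous vanishing of these four coefficients, which, after clearing the common denominators $c_0$ and $c_0^2$, I identify with $G_{11},G_{12},G_{21},G_{22}$ in the statement.

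The principal obstacle is the bookkeeping at this last step. One must correctly apply $\delta_1,\delta_2$ to the rational expressions in $c_0,c_1,c_2$, keep in mind that $u,v$ carry weight one while $a$ carries weight zero (so each coefficient inside a $\delta_i$ must be differentiated with the weight-shifted Leibniz rule established in the section on basic constructions), and verify that every term on the right-hand side of the theorem appears exactly once with the correct sign. The calculation is purely mechanical but lengthy, and it is precisely this matching of coefficients with $G_{11},G_{12},G_{21},G_{22}$ that takes up essentially the entire body of the proof.
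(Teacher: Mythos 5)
Your proposal is correct and follows essentially the same route as the paper: rule out $c_0=0$ by the preceding rank discussion, use the abelian system plus the obstruction $c_0v_2+c_1v+c_2u=0$ to solve for all first covariant derivatives of $u,v$, and then impose flatness of the resulting rank-two linear connection via the commutation relation (\ref{covariant commutator}) applied to the weight-one functions $u$ and $v$, identifying the four resulting coefficients with $G_{ij}$. The paper presents exactly this reduction in the text before the theorem and leaves the final coefficient computation to the reference, so there is nothing substantive to add.
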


\begin{example}
\label{rank2nonlin2}Consider the planar $4$-web with the following web functions
\begin{equation*}
x,\;y,\;\displaystyle\frac{x}{y},\;xy(x+y).
\end{equation*}

The linearizability conditions (see \cite{AGL04}) for this web are not satisfied, and therefore, this $4$-web is not linearizable, but in this case $G_{11}=G_{12}=G_{21}=G_{22}=0.$
Hence, the $4$-web is of rank two.
\end{example}

This example  gives us the following important property:

\emph{General $4$-webs of rank two are not
linearizable.}

\subsection{$\mathbf{4}$-Webs of Rank One}

As we have seen earlier, a $4$-web can be of rank one if $c_{0}=0$ but
one of the coefficients $c_{1}$ and $c_{2}$ of (\ref{eq4}) is not $0$ or
if $c_{0}\neq 0.$ The following theorem outlines the four cases when a
$4$-web can be of rank one.

\begin{theorem}
\label{rank1}A planar $4$-web is of rank one if and only if one of the
following conditions holds:

\begin{enumerate}
\item $c_{0}=0,$ $J_{1}=J_{2}=0,$ where%
\begin{eqnarray*}
J_{1} &=&a_{2}c_{1}c_{2}(c_{1}-c_{2})+ac_{2}^{2}(c_{1,2}-c_{1,1}) \\
&&+c_{1}c_{2}(c_{1,1}+a(c_{2,1}-c_{1,2}-c_{2,2}))+c_{1}^{2}(ac_{2,2}-c_{2,1}),
\\
J_{2} &=&c_{1}^{2}\left( c_{1}-c_{2}\right) ^{2}K+\left(
c_{1,11}-c_{1,12}\right) c_{1}c_{2}\left( c_{2}-c_{1}\right) \\
&&+c_{1}^{2}\left( c_{1}-c_{2}\right) \left( c_{2,11}-c_{2,12}\right)
-c_{2}\left( 2c_{1}-c_{2}\right) c_{1,1}(c_{1,2}-c_{1,1}) \\
&&+c_{1}^{2}c_{2,1}(c_{1,2}-c_{2,2}+c_{2,1})+c_{1}^{2}c_{1,1}(c_{2,2}-2c_{2,1})
\end{eqnarray*}%
and $c_{1}\neq c_{2},\;c_{1}\neq 0.$

\item $c_{0}=0,c_{1}=c_{2}\neq 0,$ and $J_{3}=0,$ where%
\begin{equation*}
J_{3}=\left( a_{22}-a_{12}\right) \left( 1-a\right)
+a_{2}(a_{2}-a_{1})-\left( 1-a\right) ^{2}K.
\end{equation*}

\item $c_{0}=0,c_{1}=0,c_{2}\neq 0,$ and $J_{4}=0,$ where
\begin{equation*}
J_{4}=a_{12}a-a_{1}a_{2}-Ka^{2}.
\end{equation*}

\item $c_{0}\neq 0,$ and $J_{10}=J_{11}=J_{12}=0,$ where
\begin{equation*}
J_{10}=G_{11}G_{22}-G_{21}G_{12},
\end{equation*}%
\begin{eqnarray*}
J_{11} &=&c_{0}(G_{21,1}G_{22}-G_{22,1}G_{21})+(a_{2}c_{0}-ac_{1})G_{21}^{2}
\\
&&+(ac_{2}-a_{2}c_{0}+ac_{1})G_{21}G_{22}-ac_{2}G_{22}^{2},
\end{eqnarray*}%
\begin{eqnarray*}
J_{12} &=&c_{0}(G_{21,2}G_{22}-G_{22,2}G_{21})+(a_{2}c_{0}-ac_{1})G_{21}^{2}
\\
&&+a(c_{2}-c_{1})G_{21}G_{22}-c_{2}G_{22}^{2}.
\end{eqnarray*}
\end{enumerate}
\end{theorem}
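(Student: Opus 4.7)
The plan is to study the solution space of the combined system consisting of the abelian equation
\begin{equation*}
u_{1}-u_{2}=0,\ \ v_{1}-av_{2}-a_{2}v=0,\ \ u_{1}+v_{1}=0
\end{equation*}
(for $u,v$ of weight one) together with the obstruction $\varkappa=c_{0}v_{2}+c_{1}v+c_{2}u=0$, and to determine when this space has dimension exactly one. The proof splits naturally into the four cases of the theorem according to the vanishing pattern of $c_{0},c_{1},c_{2}$, because that pattern controls how $\varkappa=0$ cuts down the four first covariant derivatives $u_{1},u_{2},v_{1},v_{2}$.

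In the subcases where $c_{0}=0$, the obstruction is an algebraic relation between $u$ and $v$. In Case 3 ($c_{1}=0$, $c_{2}\neq 0$) it forces $u=0$, so the abelian equation collapses to $\delta_{1}(v)=0$ and $\delta_{2}(v)=-(a_{2}/a)v$; applying the weighted commutator (\ref{covariant commutator}) to $v$ (weight one) gives $\delta_{2}\delta_{1}(v)-\delta_{1}\delta_{2}(v)=Kv$, which unwinds to $a_{12}a-a_{1}a_{2}-Ka^{2}=J_{4}=0$. Case 2 ($c_{1}=c_{2}\neq 0$) forces $u+v=0$, reducing everything to $u_{1}=u_{2}=\dfrac{a_{2}}{1-a}u$, and the single commutator identity collapses to $J_{3}=0$. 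Case 1 ($c_{1}\neq c_{2}$, $c_{1}\neq 0$) gives $v=-\mu u$ with $\mu=c_{2}/c_{1}$ of weight zero; substituting into the abelian system yields two independent expressions for $u_{1}$, whose equality is precisely $J_{1}=0$, and the resulting first-order system in the single unknown $u$ admits a one-parameter family of solutions exactly when the second-level compatibility via (\ref{covariant commutator}) gives $J_{2}=0$.

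Case 4 ($c_{0}\neq 0$) is the Frobenius case already set up just before Theorem \ref{rank2}: one solves $\varkappa=0$ together with the abelian equation for all four derivatives in terms of $(u,v)$, giving a system $\delta_{i}w=A_{i}w$ for $w=(u,v)^{T}$. The integrability condition obtained from (\ref{covariant commutator}) is $Gw=0$ for the $2\times 2$ matrix $G=(G_{ij})$ of Theorem \ref{rank2}. Rank two occurs when $G\equiv 0$; rank zero when $\det G\neq 0$ (then $w=0$); and rank at most one requires $\det G=J_{10}=0$ but $G\neq 0$. In that last situation the row of $G$ gives a nontrivial linear relation $\alpha u+\beta v=0$; prolonging this relation by $\delta_{1}$ and $\delta_{2}$ and substituting the Frobenius-derived $\delta_{i}w$ produces two further scalar identities which, after simplification, are precisely $J_{11}=0$ and $J_{12}=0$. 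Conversely, these guarantee that the reduced one-unknown system is consistent, giving exactly a one-dimensional solution space.

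The main obstacle is bookkeeping rather than conceptual: one must track weights carefully ($a$ has weight $0$; $u,v,a_{i},c_{1},c_{2}$ have weight $1$; $c_{0},v_{i},K,a_{ij}$ have weight $2$), apply (\ref{covariant commutator}) with the correct value of $s$ at each step, and verify that the raw compatibility expressions collapse to the closed forms $J_{1},\dots,J_{4}$ and $J_{10},J_{11},J_{12}$ stated in the theorem. Cases 2 and 3 each require only a single commutator identity, while Cases 1 and 4 are where the genuine computation lies; in Case 4, the derivation of $J_{11},J_{12}$ from the prolongation of $\det G=0$ is the most delicate step, since it is there that second covariant derivatives of $a,K$ and $c_{i}$ must be consistently re-expressed through the Frobenius system.
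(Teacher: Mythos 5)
Your proposal is correct and follows exactly the route the paper intends: the paper's own ``proof'' of this theorem is just the citation ``See \cite{GL07}'', but the preceding subsections set up precisely your case division --- the algebraic relation $c_{1}v+c_{2}u=0$ collapsing the system to a single unknown when $c_{0}=0$, and the Frobenius system with compatibility matrix $G=(G_{ij})$ from Theorem \ref{rank2} when $c_{0}\neq 0$ --- and your derivations of $J_{3}$ and $J_{4}$ from the single commutator identity (\ref{covariant commutator}) check out against the stated formulas. There is no substantive difference between your plan and the argument the paper relies on.
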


\begin{proof}
See \cite{GL07}.
\end{proof}

\begin{example}
\label{rank1nonlin1} Consider the planar $4$-web with the following web functions
\begin{equation*}
x,\;y,\;\displaystyle\frac{xy^{2}}{(x-y)^{2}},\;\displaystyle\frac{x^{2}y}{(x-y)^{2}}.
\end{equation*}

For this web, we have  $c_{0}=0$ and $J_{1}=J_{2}=0.$ Thus, we have the web of
type $1$ as indicated in Theorem \ref{rank1}, and this $4$-web is of rank
one.
\end{example}

In this example the 4-web is not linearizable. Therefore,

\emph{General $4$-webs of rank one are not linearizable.}

\section{Planar $\mathbf{5}$-Webs  of Maximum Rank}

Let us consider a planar $5$-web in the standard normalization%
\begin{equation*}
\omega _{1}+\omega _{2}+\omega _{3} =0, \;\;
a\omega _{1}+\omega _{2}+\;\omega_{4} =0, \;\;
b\omega _{1}+\omega _{2}+\;\omega_{5} =0,
\end{equation*}%
where $a$ and $b$ are the basic invariants of the web.

The abelian equation for such a web has the form
\begin{equation*}
(w+au+bv)\omega _{1}+(w+u+v)\omega _{2}+w\omega _{3}+u\omega _{4}+v\omega
_{5}=0,
\end{equation*}%
where we have
$$
\lambda _{1}=w+au+bv,\;\;\lambda _{2}=w+u+v,
$$
and
$$
\lambda_{3}=w,\;\;\lambda _{4}=u, \;\; \lambda _{5}=v.
$$

The functions $w, u,$ and $v$ satisfy the abelian equation%
\begin{equation*}
\renewcommand{\arraystretch}{1.3}
\begin{array}{ll}
\delta _{1}\left( w\right) -\delta _{2}\left( w\right) =0, &
\;\; \delta _{1}\left( u\right) -\delta _{2}\left( au\right) =0, \\
\delta _{1}\left( v\right) -\delta _{2}\left( bv\right) =0, &
\;\; \delta _{1}\left( w\right) +\delta _{1}\left( u\right) +\delta _{1}\left(
v\right) =0,%
\end{array}
\renewcommand{\arraystretch}{1}
\end{equation*}
and their compatibility condition takes the form%
\begin{equation*}
\renewcommand{\arraystretch}{1.3}
\begin{array}{ll}
\varkappa =&\left( \Delta _{1}\Delta _{2}\Delta _{3}\delta _{1}-\delta
_{1}\Delta _{2}\Delta _{3}\Delta _{1}\right) \left( w\right) +\left( \Delta
_{1}\Delta _{2}\Delta _{3}\delta _{1}-\Delta _{1}\delta _{1}\Delta
_{3}\Delta _{2}\right) \left( u\right) \\
&+\left( \Delta _{1}\Delta _{2}\Delta _{3}\delta _{1}-\Delta _{1}\Delta
_{2}\delta _{1}\Delta _{3}\right) \left( v\right) =0.%
\end{array}
\renewcommand{\arraystretch}{1}
\end{equation*}

In the canonical coordinates in the jet bundles, the abelian equation has
the form
\begin{equation*}
\begin{array}{ll}
u_{1}+v_{1}+w_{1}=0, &\;\; v_{1}-bv_{2}-b_{2}v=0, \\
u_{1}-au_{2}-a_{2}u=0, &
\;\; w_{1}-w_{2}=0,
\end{array}%
\end{equation*}%
and the obstruction $\varkappa $ equals%
\begin{equation*}
c_{0}w_{22}+c_{1}w_{2}+c_{2}v_{2}+c_{3}w+c_{4}u+c_{5}v=0,
\end{equation*}%
where the explicit form of expressions for the coefficients $c_9, c_1, c_2, c_3,
c_4$ and $c_5$ can be found in \cite{GL07}.

This gives the following result \cite{GL07}:

\emph{A planar $5$-web is of maximum rank if and only if the invariants $%
c_{0},c_{1},c_{2},c_{3},c_{4}\ $and $c_{5}$ vanish.}

Similar to the case of $4$-webs, the coefficient $c_{0}$ in the expression
of $\varkappa $ for $5$-webs has an intrinsic geometric meaning.

Namely, we call by the \emph{curvature function} of  a $5$-web the arithmetic mean of
the curvature functions of its ten $3$-subwebs.

The straightforward calculation shows that the curvature function equals to $c_{0}.$

In other words \cite{GL07}, the curvature of a planar $5$-web of maximum rank equals zero.

For the case of  planar $5$-webs with constant basic invariants $a$ and $b$ the invariants $c_{i},$ for $i=0,1,2,3,4,5,$ vanish (and the web is of maximum rank) if and
only if this web is parallelizable \cite{GL07}.

\begin{example}
\label{Bol 5-web} We consider the Bol $5$-web with the web functions

\begin{equation*}
x,y,\displaystyle\frac{x}{y},\displaystyle\frac{1-y}{1-x},\displaystyle\frac{x-xy}{y-xy}.
\end{equation*}%

For this web we have
$$
K=0, \,a=\displaystyle\frac{xy-x}{xy-y}, \,b=\displaystyle\frac{y-1}{x-1},
$$
and $c_{i}=0$, for $i=0,1,...,5$.

Thus, the $3$-web is of maximum rank.

Using the linearizability conditions for planar $5$-webs \cite{AGL04},
 we see that the Bol $5$-web is not linearizable.
\end{example}

The above example leads us to the following important observation:

\emph{General planar $5$-webs of maximum rank are not linearizable.}

\section{Projective Structures and Planar 4-Webs}
In this section we give more direct construction of the projective structure associated with 4-webs (see \cite{GL08a}).

 Remind that an affine connection $\nabla$ on the plane determines a covariant differential
 $$
 d_{\nabla }:\Omega ^{1}\left( \mathbb{R}^{2}\right) \rightarrow \Omega
^{1}\left( \mathbb{R}^{2}\right) \otimes \Omega ^{1}\left( \mathbb{R}%
^{2}\right).
 $$
 This differential splits into the sum
 $$
 d_{\nabla }=d_{\nabla }^{a}\oplus d_{\nabla }^{s},
 $$
where
$$
d_{\nabla }^{a}:\Omega ^{1}\left( \mathbb{R}^{2}\right) \rightarrow \Omega
^{2}\left( \mathbb{R}^{2}\right)
$$
is the skew-symmetric part, and
$$
d_{\nabla }^{s}:\Omega ^{1}\left( \mathbb{R}^{2}\right) \rightarrow S
^{2}\left( \Omega^{1}\right)\left( \mathbb{R}^{2}\right)
$$
is the symmetric one.

The connection is torsion-free if and only the skew-symmetric part coincides with the de Rham differential: $$d_{\nabla }^{a}=d.$$

A foliation given by a differential 1-form $\omega$ is geodesic (i.e., all leaves of the foliation are geodesics) with respect to connection $\nabla$ if and only if (see \cite{GL08a}):
$$
d_{\nabla }^{s}(\omega)=\theta \cdot \omega
$$
for some differential 1-form $\theta$.

Remark that it follows from the above formula that two affine connections, say, $\nabla$ and $\nabla'$, are projectively equivalent (i.e., have the same geodesics) if and only if there exists a differential 1-form $\rho$ such that
$$
d_{\nabla }^{s}(\omega)-d_{\nabla '}^{s}(\omega)=\rho \cdot \omega
$$
for all differential 1-forms $\omega$.

Assume that a 4-web is given by differential 1-forms $\omega_{i}, i=1, 2, 3, 4,$ which are normalized
\begin{eqnarray*}
\omega_{1}+\omega_{2}+\omega_{3}=0,\\
a \omega_{1}+\omega_{2}+\omega_{4}=0,
\end{eqnarray*}
and
$$
d \omega_{3}=0.
$$

Let $\nabla$ be a torsion-free connection for which all foliations $\omega_{i}=0,\, i=1, 2, 3, 4,$ are geodesics.

We call such 4-webs \emph{geodesic}.

Then
$$
d_{\nabla }^{s}(\omega_{i})=\theta_{i} \cdot \omega_{i}
$$
for all $i=1, 2, 3, 4$ and some differential 1-forms $\theta_{i}$.

Differentiating the normalization conditions, we get
\begin{eqnarray*}
\theta_{1}\cdot\omega_{1}+\theta_{2}\cdot\omega_{2}+\theta_{3}\cdot\omega_{3}=0,\\
da\cdot\omega_{1}+ a \theta_{1}\cdot\omega_{1}+\theta_{2}\cdot\omega_{2}+\theta_{4}\cdot\omega_{4}=0.
\end{eqnarray*}
If
$$
\theta_{i}=A_{i}\omega_{1}+B_{i}\omega_{2}
$$
for all $i=1, 2, 3, 4,$ then the above system is just a system of linear equations for coefficients $A_{i}$ and $B_{i}$.

Solving this system, we find that
\begin{equation*}
\renewcommand{\arraystretch}{1.5}
\begin{array}{ll}
A_{2}=A_{1}+z,& B_{2}=B_{1}+z,\\
A_{3}=A_{1},& B_{3}=B_{1}+z,\\
A_{4}=A_{1}+ \displaystyle\frac{a_{1}}{a},&  B_{4}=B_{1}+z,
\end{array}
\renewcommand{\arraystretch}{1}
\end{equation*}
where
$$
z=\displaystyle\frac{a_{1}-a a_{2}}{a(1-a)}.
$$
In other words, the affine connection is completely determined by the differential 1-form $\theta_{1}$, and
\begin{equation*}
\renewcommand{\arraystretch}{1.5}
\begin{array}{ll}
&\theta_{2}=\theta_{1}-z \omega_{3},\\
&\theta_{3}=\theta_{1}+z \omega_{2},\\
&\theta_{4}=\theta_{1}+\displaystyle\frac{a_{1}}{a}\omega_{1}+z \omega_{2}.
\end{array}
\renewcommand{\arraystretch}{1}
\end{equation*}
This shows that all such affine connections are projectively equivalent.
Taking the representative with
$$
\theta_{1}=\displaystyle\frac{z}{2} \omega_{3},
$$
we get the following result (this result was first obtained 
in \cite{BB38}, \S29, p. 246):
\begin{theorem}
There is a unique projective structure associated with a planar $4$-web in such a way that the $4$-web is geodesic with respect to the structure.

The projective structure is an equivalence class of the torsion-free affine connection $\nabla$ with the following symmetric differential:
\begin{equation*}
\renewcommand{\arraystretch}{1.5}
\begin{array}{ll}
&d_{\nabla }^{s}(\omega_{1})=\displaystyle\frac{z}{2}\ \omega_{3} \cdot \omega_{1},\\
&d_{\nabla }^{s}(\omega_{2})=-\displaystyle\frac{z}{2}\ \omega_{3} \cdot \omega_{2}.
\end{array}
\renewcommand{\arraystretch}{1}
\end{equation*}
\end{theorem}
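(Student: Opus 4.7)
The plan is to extract the four 1-forms $\theta_{i}$ from the geodesic conditions $d_{\nabla}^{s}(\omega_{i})=\theta_{i}\cdot\omega_{i}$ and to constrain them by applying $d_{\nabla}^{s}$ to the two normalization equations. First, using the Leibniz rule for $d_{\nabla}^{s}$ with respect to the symmetric product, differentiating $\omega_{1}+\omega_{2}+\omega_{3}=0$ and $a\omega_{1}+\omega_{2}+\omega_{4}=0$ gives
$$
\theta_{1}\cdot\omega_{1}+\theta_{2}\cdot\omega_{2}+\theta_{3}\cdot\omega_{3}=0, \qquad (da+a\theta_{1})\cdot\omega_{1}+\theta_{2}\cdot\omega_{2}+\theta_{4}\cdot\omega_{4}=0,
$$
which are precisely the two relations already displayed in the excerpt.

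Second, I would expand each $\theta_{i}=A_{i}\omega_{1}+B_{i}\omega_{2}$, substitute $\omega_{3}=-\omega_{1}-\omega_{2}$, $\omega_{4}=-a\omega_{1}-\omega_{2}$, and $da=a_{1}\omega_{1}+a_{2}\omega_{2}$, and equate coefficients of $\omega_{1}^{2}$, $\omega_{1}\omega_{2}$, $\omega_{2}^{2}$ in the symmetric algebra. From the first equation this yields $A_{1}=A_{3}$, $B_{2}=B_{3}$, and $A_{2}+B_{1}=A_{3}+B_{3}$; from the second, $A_{4}=A_{1}+a_{1}/a$, $B_{2}=B_{4}$, and a relation that forces the common value $B_{2}-B_{1}=B_{3}-B_{1}=B_{4}-B_{1}$ to equal $z=(a_{1}-aa_{2})/(a(1-a))$. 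Combined, these reproduce exactly the formulas for $A_{i}$, $B_{i}$ given before the theorem, with $A_{1}$ and $B_{1}$ remaining as two free parameters.

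Third, for uniqueness up to projective equivalence, observe that if $(A_{i},B_{i})$ and $(A_{i}',B_{i}')$ are two solutions, then $A_{i}-A_{i}'=A_{1}-A_{1}'$ and $B_{i}-B_{i}'=B_{1}-B_{1}'$ independently of $i$. Setting $\rho:=(A_{1}-A_{1}')\omega_{1}+(B_{1}-B_{1}')\omega_{2}$, we have $\theta_{i}-\theta_{i}'=\rho$, so that $d_{\nabla}^{s}(\omega_{i})-d_{\nabla'}^{s}(\omega_{i})=\rho\cdot\omega_{i}$ for $i=1,2$; since $\{\omega_{1},\omega_{2}\}$ span $\Omega^{1}$ and both sides are tensorial, this extends to every 1-form, giving projective equivalence by the criterion recalled at the start of the section. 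Existence is guaranteed by choosing any specific $(A_{1},B_{1})$: the symmetric differential is then prescribed on the basis $\{\omega_{1},\omega_{2}\}$, extends by the Leibniz rule, and together with $d_{\nabla}^{a}=d$ assembles into a torsion-free connection.

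Finally, I would select the canonical representative by setting $A_{1}=B_{1}=-z/2$, equivalently $\theta_{1}=(z/2)\omega_{3}$; the formulas then yield $\theta_{2}=-(z/2)\omega_{3}$, giving the stated symmetric differentials on $\omega_{1}$ and $\omega_{2}$. The main obstacle in this argument is purely organizational: one must verify that the six scalar equations coming from the two symmetric-tensor identities are exactly consistent, producing a two-parameter family that is neither overdetermined (guaranteeing existence) nor larger than two parameters (guaranteeing that the ambiguity is absorbed by projective equivalence). Once this bookkeeping is done, both uniqueness of the projective class and the explicit form of the canonical $d_{\nabla}^{s}$ follow immediately.
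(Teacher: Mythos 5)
Your proposal is correct and follows essentially the same route as the paper: differentiate the two normalization identities with the symmetric Leibniz rule, expand $\theta_{i}=A_{i}\omega_{1}+B_{i}\omega_{2}$, solve the resulting linear system to obtain the one-$\theta_{1}$-parameter family with the stated offsets involving $z$, observe that any two solutions differ by a common $1$-form $\rho$ (hence are projectively equivalent), and normalize by $\theta_{1}=\tfrac{z}{2}\omega_{3}$. The coefficient relations you list check out and reproduce exactly the formulas the paper records before the theorem.
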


We say that a planar $d$-web is \emph{geodesic} with respect to an affine connection if all leaves of all foliations are geodesic.

The above theorem gives a criterion for a $d$-web to be geodesic. For simplicity we take the case of 5-webs.
Let a 5-web be given by differential 1-forms $\omega_{i}, i=1, 2, 3, 4 ,5,$ which are normalized as follows:
\begin{equation*}
\renewcommand{\arraystretch}{1.5}
\begin{array}{ll}
\omega_{1}+\omega_{2}+\omega_{3}=0,\\
a \omega_{1}+\omega_{2}+\omega_{4}=0,\\
b \omega_{1}+\omega_{2}+\omega_{5}=0,
\end{array}
\renewcommand{\arraystretch}{1}
\end{equation*}
and
$$
d\omega_{3}=0.
$$
This web is geodesic if and only if the fifth foliation $\omega_{5}=0$ is geodesic with respect to the canonical projective structure determined by the 4-web ($\omega_{i}, i=1, 2, 3, 4$).

We have
$$
d_{\nabla }^{s}(\omega_{5})=\theta_{5} \cdot \omega_{5}+(z b (1-b)-(b_{1}-b b_{2}))\omega_{1}^{2}.
$$
Therefore, in order to have a  geodesic 5-web, the last term should vanish.
\begin{theorem}
A $5$-web is geodesic if and only if the basic invariants $a$ and $b$ satisfy the following condition:
\begin{equation}
\frac{a_{1}-a a_{2}}{a(1-a)}=\frac{b_{1}-b b_{2}}{b(1-b)}. \label{geod cond}
\end{equation}
\end{theorem}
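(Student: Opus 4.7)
The plan is to leverage the previous theorem, which associates a unique projective structure to the 4-subweb $(\omega_1,\omega_2,\omega_3,\omega_4)$ in which those four foliations are geodesic. Because projectively equivalent torsion-free connections have the same geodesics, a foliation $\omega=0$ is geodesic for one representative iff it is geodesic for any other (this is the content of the identity $d_\nabla^s(\omega)-d_{\nabla'}^s(\omega)=\rho\cdot\omega$ recalled earlier in the section). Hence the 5-web is geodesic if and only if $\omega_5=0$ is geodesic with respect to the distinguished representative $\nabla$ supplied by the 4-web theorem, namely the one for which
\begin{equation*}
d_\nabla^s(\omega_1)=\tfrac{z}{2}\,\omega_3\cdot\omega_1,\qquad d_\nabla^s(\omega_2)=-\tfrac{z}{2}\,\omega_3\cdot\omega_2,\qquad z=\tfrac{a_1-aa_2}{a(1-a)}.
\end{equation*}

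Next I would compute $d_\nabla^s(\omega_5)$ explicitly. Using $\omega_5=-b\omega_1-\omega_2$ from the normalization together with the Leibniz rule $d_\nabla^s(f\omega)=df\cdot\omega+f\,d_\nabla^s(\omega)$, and substituting $\omega_3=-\omega_1-\omega_2$ and $db=b_1\omega_1+b_2\omega_2$, a routine expansion in $S^2(\Omega^1)$ yields
\begin{equation*}
d_\nabla^s(\omega_5)=\Bigl(\tfrac{bz}{2}-b_1\Bigr)\omega_1^2+\Bigl(\tfrac{(b-1)z}{2}-b_2\Bigr)\omega_1\cdot\omega_2-\tfrac{z}{2}\,\omega_2^2.
\end{equation*}

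Finally, $\omega_5=0$ is geodesic iff $\omega_5$ divides $d_\nabla^s(\omega_5)$ in the symmetric algebra, i.e. $d_\nabla^s(\omega_5)=\theta_5\cdot\omega_5$ for some $\theta_5=A\omega_1+B\omega_2$. Matching the $\omega_2^2$ and $\omega_1\cdot\omega_2$ coefficients determines $B$ and $A$ uniquely, and the remaining $\omega_1^2$ coefficient collapses to the single scalar obstruction
\begin{equation*}
zb(1-b)-(b_1-bb_2)=0,
\end{equation*}
which is exactly the extra term written just before the theorem statement. Since $b\neq 0,1$, this is equivalent to $z=(b_1-bb_2)/(b(1-b))$, and inserting the value $z=(a_1-aa_2)/(a(1-a))$ yields the claimed identity \eqref{geod cond}. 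The only delicate point is the bookkeeping in the symmetric tensor algebra when expanding $d_\nabla^s(\omega_5)$; conceptually, the uniqueness of the obstruction becomes manifest by observing that it equals $d_\nabla^s(\omega_5)(X,X)$ for any $X$ spanning $\ker\omega_5$.
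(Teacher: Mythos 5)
Your proposal is correct and follows exactly the paper's route: reduce to checking that $\omega_5=0$ is geodesic for the canonical projective structure of the $4$-subweb (using projective invariance of geodesics), expand $d_\nabla^s(\omega_5)$ via the normalization $\omega_5=-b\omega_1-\omega_2$, and extract the single obstruction $zb(1-b)-(b_1-bb_2)$, which matches the formula displayed just before the theorem. Your explicit expansion and coefficient matching simply fill in the computation the paper leaves implicit.
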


 The linearizability problem (see \cite{AGL04}) for planar webs can be reformulated now as follows: \emph{a planar $d$-web is linearizable if and only if the web is geodesic and the canonical projective structure of one of its $4$-subwebs is flat.}

 The flatness of a projective structure can be checked by the Liouville tensor (see \cite{Lio89}, \cite{Lie83}, \cite{K08}).
 This tensor can be constructed as follows (see, for example, \cite{NS94}). Let $\nabla$ be a representative of the canonical projective structure, and $Ric$ be the Ricci tensor of the connection $\nabla$. Define a new tensor $\mathfrak{P}$ as
 $$
\mathfrak{P}(X,Y)=\frac{2}{3}Ric(X,Y)+\frac{1}{3}Ric(Y,X)
 $$
for all vector fields $X,Y$.

The Liouville tensor $\mathfrak{L}$ is defined as follows:
$$
\mathfrak{L}(X,Y,Z)=\nabla_{X}(\mathfrak{P})(Y,Z)-\nabla_{Y}(\mathfrak{P})(X,Z)
$$
for all vector fields $X,Y,Z$.

The tensor is skew-symmetric in $X$ and $Y$, and therefore it belongs to
$$
\mathfrak{L}\in\Omega^{1}(\mathbb{R}^{2})\otimes\Omega^{2}(\mathbb{R}^{2}).
$$
It is known (see \cite{Lio89}, \cite{NS94}, \cite{Lie83}, \cite{K08}) that \emph{the Liouville tensor depends on the projective structure defined by $\nabla$ and vanishes if and only if the projective structure is flat.}

For the case of the projective structure associated with a planar 4-web we shall call this tensor the \emph{Liouville tensor} of the 4-web

Consider three invariants:
\begin{equation}
w=\frac{f_y}{f_x}, \;\;  \alpha=\frac{a a_y-w a_x}{w a(1-a)}, \;\;  k = (\log w)_{xy}.
\label{invar}\end{equation}

Then the Liouville tensor has the form
$$
\mathfrak{L}=(L_{1}\omega_{1}+\frac{L_{2}}{w}\omega_{2})\otimes\omega_{1}\wedge\omega_{2},
$$
where $L_{1}$ and $L_{2}$ are relative differential invariants of order three.
 The explicit formulas for these invariants are
  \begin{equation}
  \renewcommand{\arraystretch}{1.5}
  \begin{array}{lll}
 3L_{1}&=& w (-(k w)_x + \alpha_{xx} + \alpha \alpha_x) + (\alpha w_{xx} + (\alpha^2 +3 \alpha_x) w_x -2\alpha_{xy} - 2 \alpha\alpha_y)\\
  && +w^{-1} (-\alpha w_{xy} - 2\alpha_y  w_x + \alpha w_{x}^2) +  w^{-2}\alpha w_x w_y,\\
 3L_{2}&=&w^2(-(kw^{-1})_y + 2\alpha \alpha_x) + w (2\alpha^2 w_x - 2 \alpha_{xy} - \alpha \alpha_y) \\
 &&+ (-\alpha w_{xy} - 2 \alpha_y w_x + \alpha_{yy}) +w^{-1} (\alpha w_x w_y - \alpha_y w_y).
 \end{array}
 \renewcommand{\arraystretch}{1} \label{Lio tensor}
 \end{equation}

 Summarizing, we get the following result.
 \begin{theorem} \label{linthm}
 A planar d-web is linearizable if and only if the web is geodesic and the Liouville tensor of one of its $4$-subwebs vanishes.
 \end{theorem}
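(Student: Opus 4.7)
The plan is to reduce linearizability to flatness of the canonical projective structure of a $4$-subweb, exploiting the uniqueness of this structure together with the classical fact that flatness of a planar projective structure is equivalent to the existence of local projective (i.e., straightening) coordinates.

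For the forward direction, I would argue as follows. Suppose the $d$-web is linearizable, so there is a local diffeomorphism $\varphi$ carrying it to a $d$-web of straight lines in an affine plane $\mathbb{R}^{2}$. The standard flat affine connection $\nabla^{0}$ on $\mathbb{R}^{2}$ has the straight lines as geodesics; hence the pull-back $\nabla=\varphi^{\ast}\nabla^{0}$ is a torsion-free flat connection for which every leaf of every foliation of the $d$-web is geodesic. In particular the web is geodesic. Fix any $4$-subweb; by the uniqueness statement preceding Theorem \ref{linthm}, the projective equivalence class $[\nabla]$ coincides with the canonical projective structure of this $4$-subweb. Since $\nabla$ is flat, so is $[\nabla]$, and consequently the Liouville tensor of the $4$-subweb vanishes.

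For the reverse direction, assume the web is geodesic and let $W_{4}$ be a $4$-subweb whose Liouville tensor vanishes. By the cited property of $\mathfrak{L}$, the canonical projective structure $[\nabla]$ of $W_{4}$ is flat, so there is a local diffeomorphism $\varphi$ sending $[\nabla]$ to the standard flat projective structure on $\mathbb{R}^{2}$; under $\varphi$ the geodesics of $[\nabla]$ become straight lines. Because the original $d$-web is geodesic with respect to $[\nabla]$, every leaf of every foliation is a geodesic of $[\nabla]$, and therefore $\varphi$ maps the entire $d$-web to a $d$-web of straight lines. This is precisely linearizability.

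The one subtle point — the main ``obstacle'' — is the step asserting that a flat projective structure on the plane admits straightening coordinates. This is the classical theorem of Beltrami/Lie/Liouville that characterizes projective equivalence to the standard flat structure in terms of the vanishing of the Liouville invariant, which is precisely what is invoked via the cited references \cite{Lio89}, \cite{Lie83}, \cite{NS94}, \cite{K08}. Once that result is accepted, the remainder of the argument is a bookkeeping exercise combining the uniqueness of the $4$-web projective structure with the definition of geodesicity for the extra foliations of the $d$-web, and no fresh computation with the explicit formulas \eqref{Lio tensor} is required.
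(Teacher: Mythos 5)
Your proposal is correct and follows essentially the same route as the paper, which obtains the theorem by combining the reformulation ``linearizable $\Leftrightarrow$ geodesic $+$ flat canonical projective structure of a $4$-subweb'' with the classical Liouville criterion for flatness; you merely make explicit the pull-back/uniqueness argument that the paper leaves to the cited reformulation. The only point worth tightening is in the reverse direction: the hypothesis gives geodesicity with respect to \emph{some} torsion-free connection, and you should invoke the uniqueness of the projective structure of the $4$-subweb once more to conclude that this connection is projectively equivalent to the canonical one, so that all leaves are indeed geodesics of the flat structure being straightened.
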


 \begin{corollary} If the basic invariants of all $4$-subwebs of  a $d$-web
are constants, then  the $d$-web is linearizable if and only if it is
 parallelizable.
 \end{corollary}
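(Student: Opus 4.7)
The plan is to combine Theorem \ref{linthm} with the criterion for parallelizability of a $d$-web recalled in Section 3 (namely, $K=0$ for the 3-subweb $\langle\omega_1,\omega_2,\omega_3\rangle$ together with constancy of all basic invariants $a_i$). The direction parallelizable $\Rightarrow$ linearizable is immediate: a web of parallel straight lines is geodesic with respect to the standard flat affine connection, so its defining $1$-forms can be taken linear in affine coordinates.

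For the converse, assume the $d$-web is linearizable and the basic invariants of every one of its $4$-subwebs are constants. Applying the hypothesis to the $4$-subwebs $\langle\omega_1,\omega_2,\omega_3,\omega_{j+2}\rangle$ for $j=1,\ldots,d-2$ shows that each basic invariant $a_j$ of the $d$-web itself is constant. In view of the parallelizability criterion, it only remains to deduce that the curvature $K$ of the $3$-subweb $\langle\omega_1,\omega_2,\omega_3\rangle$ vanishes.

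By Theorem \ref{linthm}, linearizability forces the Liouville tensor of some $4$-subweb to vanish. Choose the $4$-subweb $\langle\omega_1,\omega_2,\omega_3,\omega_4\rangle$, whose (constant) basic invariant we still call $a$. Since $a_x=a_y=0$, the invariant $\alpha=(aa_y-wa_x)/(wa(1-a))$ in (\ref{invar}) vanishes identically together with all its partial derivatives. Substituting $\alpha\equiv 0$ into (\ref{Lio tensor}) collapses every term except one in each component, yielding
$$3L_1=-w(kw)_x,\qquad 3L_2=-w^2(kw^{-1})_y.$$
Vanishing of the Liouville tensor then gives $(kw)_x=0$ and $(kw^{-1})_y=0$, so $kw=F(y)$ and $kw^{-1}=G(x)$ for suitable one-variable functions $F,G$. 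Dividing, $w^2=F(y)/G(x)$, whence $\log w=\tfrac12(\log F(y)-\log G(x))$, and therefore $k=(\log w)_{xy}=0$. Comparing with (\ref{3-web curvature}) shows $K=k/(f_xf_y)=0$, which is precisely the missing parallelizability condition.

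The only genuinely technical step is the bookkeeping inside the Liouville tensor formula (\ref{Lio tensor}), but the constancy hypothesis forces $\alpha\equiv 0$ and eliminates all but one term in each component; from there the argument is a two-line integration. Note that geodesicity of the $d$-web, required in the statement of Theorem \ref{linthm}, is automatic under the hypothesis (since, for instance in the $5$-web case, both sides of (\ref{geod cond}) reduce to $0$), and the analogous condition for any pair of $4$-subwebs is satisfied trivially when all $a_i$ are constants.
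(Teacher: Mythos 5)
Your proof is correct and follows essentially the same route as the paper's: geodesicity is automatic from (\ref{geod cond}), constancy of $a$ forces $\alpha\equiv 0$, the Liouville tensor collapses to $(kw)_x=0$ and $(kw^{-1})_y=0$, and integrating gives $k=0$, hence $K=0$ and parallelizability by the Section 3 criterion. Your write-up is in fact slightly more explicit than the paper's on the easy direction (parallelizable $\Rightarrow$ linearizable) and on the logarithmic integration yielding $(\log w)_{xy}=0$, but these are presentational differences, not a different argument.
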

 \begin{proof}
First of all, the web is geodesic because of conditions (\ref{geod cond}).

Moreover, for a 4-subweb , condition  $a=\operatorname{const}.$ implies $\alpha = 0$, and by
 Theorem \ref{linthm} and (\ref{Lio tensor}), the 4-web is linearizable if and
 only if
 $$(k w)_x = 0, $$
 and
 $$(k w^{-1})_y = 0. $$

 Then $w = A(x) B(y)$ and by (\ref{3-web curvature}), $K=0$. Therefore, due to  Section 3, the 4-web is parallelizable.

 The $d$-web is parallelizable too, because it geodesic and has constant basic invariants.
 \end{proof}

\end{document}